\documentclass[12pt]{amsart}
\setlength{\textwidth}{6.0in}
\setlength{\oddsidemargin}{0.25in}
\setlength{\evensidemargin}{0.25in}
\usepackage{amsfonts}
\usepackage{amscd}
\usepackage{amssymb}
\usepackage{graphics}

\usepackage{amsmath}
\usepackage{graphicx,psfrag}
\usepackage{subcaption,mathtools}

\usepackage{hyperref}
\hypersetup{colorlinks,citecolor=blue}
\newcommand{\la}{\langle}
\newcommand{\ra}{\rangle}
\newtheorem{theorem}{Theorem}

\newtheorem{corollary}[theorem]{Corollary}

\newtheorem{definition}[theorem]{Definition}

\newtheorem{lemma}[theorem]{Lemma}

\newtheorem{proposition}[theorem]{Proposition}
\newtheorem{remark}[theorem]{Remark}

\addtolength{\hoffset}{-0.5cm}
\addtolength{\textwidth}{1cm}

\newcommand{\BZ}{{\mathbb{Z}}}

\newcommand{\Cl}{\mathrm{Cl}}

\newcommand{\zz}{\mathbb{Z}[\frac{1}{2}]}

\newcommand{\Ztwo}{\mathbb{Z}^2}

\begin{document}
	\author{Gili Golan Polak}\thanks{This research was supported by the ISRAEL SCIENCE FOUNDATION  (grant 2322/19).}

	\title{The ``spread'' of Thompson's group $F$}
	
%

	\begin{abstract}
%
%
		Recall that a group $G$ is said to be \textit{$\frac{3}{2}$-generated} if  every non-trivial element $g\in G$ has a co-generator in $G$ (i.e., an element which together with $g$ generates $G$). 
		Thompson's group $V$ was proved to be $\frac{3}{2}$-generated by Donoven and Harper in 2019. It was the first example of an infinite finitely presented non-cyclic $\frac{3}{2}$-generated group. 
		In 2022, Bleak, Harper and Skipper proved that Thompson's group $T$ is also $\frac{3}{2}$-generated.
		Since the abelianization of Thompson's group $F$ is $\Ztwo$, it cannot be $\frac{3}{2}$-generated. However, we 	recently proved that Thompson's group $F$ is ``almost'' $\frac{3}{2}$-generated in the sense that every element of $F$ whose image in the abelianization forms part of a generating pair of $\mathbb{Z}^2$ is part of a generating pair of $F$.
		
		A natural generalization of $\frac{3}{2}$-generation is the notion of spread. Recall that the \emph{spread} of  a group $G$ is the supremum over all integers $k$ such that every $k$ non-trivial elements of $G$ have a common co-generator in $G$. The \emph{uniform spread} of a group $G$ is the supremum over all integers $k$ for which there exists a conjugacy class $C\subseteq G$ such that every $k$ non-trivial elements of $G$ have a common co-generator which belongs to $C$. In this paper we study
		 modified versions of these notions for Thompson's group $F$.		
	\end{abstract}
	
	\maketitle

	\section{Introduction}

	A group $G$ is said to be $\frac{3}{2}$-generated if every non-trivial element of $G$ is part of a generating pair of $G$. In 2000, settling a problem of Steinberg from 1962 \cite{S}, Guralnick and Kantor proved that all finite simple groups are $\frac{3}{2}$-generated \cite{GK}. 
	In 2008, Breuer, Guralnick and Kantor \cite{BGK} observed that if a group $G$ is $\frac{3}{2}$-generated then every proper quotient of it must be cyclic. They conjectured that for finite groups this is also a sufficient condition. The conjecture was proved in 2021 by Burness, Guralnick and Harper \cite{BGH}.
		In fact, Burness, Guralnick and Harper proved a much stronger theorem. 
	Recall that the \emph{spread} of a group $G$, denoted $s(G)$, is the supremum over all integers $k$ such that any $k$ non-trivial elements of $G$ have a common co-generator in $G$.
	  The notion of spread was first introduced in 1975 by  Brenner and Wiegold who studied the spread of soluble groups and of some families of simple groups \cite{BrWi}.  Since then the spread of finite groups has attracted a lot of attention (see, for example, \cite{BrWi,BrWi2,BGK,BH,BGH,GK,GSha,H}).
	  
	  By definition, a group $G$ is $\frac{3}{2}$-generated if and only if the spread $s(G)\geq 1$. In \cite{BGH}, Burness, Guralnick and Harper proved that if $G$ is a finite group such that every proper quotient of it is cyclic then $s(G)\geq 2$. It follows that a finite group $G$ is $\frac{3}{2}$-generated if and only if $s(G)\geq 2$.  In other words, there are no finite groups with spread $1$. 
	
	A related notion is that of uniform spread. Recall that the \emph{uniform spread} of a group $G$, denoted $u(G)$, is the supremum over all integers $k$ for which there is an element $g\in G$ 
	 such that every $k$ non-trivial elements of $G$ have a common co-generator that is a conjugate of $g$. 
	  The term uniform spread was introduced in 2008 by Breuer, Guralnick and Kantor \cite{BGK}.
	The idea is that if $u(G)\geq m$, then the common co-generators of different $m$-tuples of non-trivial elements of $G$ can be chosen 	uniformly in a sense (from the same conjugacy class). Note that already in 1970 Binder studied the uniform spread of symmetric groups \cite{B}.
	 By definition, for every group $G$, we have $u(G)\leq s(G)$. Guralnick and Kantor \cite{GK} proved that $u(G) \geq  1$ for every finite simple group $G$. Burness, Guralnick and Kantor characterized the finite groups with $u(G)=0$ and with $u(G)=1$ \cite{BGH}.  Uniform spread of finite groups was studied also in  \cite{BH,BGu,GSha}, among others. 
	 
	 The study of $\frac{3}{2}$-generation, the spread and the uniform spread of infinite groups started only recently. 	Note that for an infinite group $G$ every proper quotient being cyclic is not a sufficient condition for the group  being $\frac{3}{2}$-generated. Indeed, the infinite alternating group $A_{\infty}$ is simple but not finitely generated. Moreover, there are finitely generated infinite simple groups which are not $2$-generated (see \cite{Guba})  and in particular, are not $\frac{3}{2}$-generated. Recently, Cox \cite{Cox} constructed an example of an infinite $2$-generated group $G$ such that every proper quotient of $G$ is cyclic and yet $G$ is not $\frac{3}{2}$-generated.

	 Obvious examples of infinite $\frac{3}{2}$-generated groups are $\mathbb{Z}$ and the Tarski monsters $T$ constructed by Olshanskii \cite{O}.  In fact, $s(\mathbb{Z})=u(\mathbb{Z})=s(T)=u(T)=\infty$. 
	 %
	 In 2019, Donoven and Harper gave the first examples of infinite non-cyclic $\frac{3}{2}$-generated groups, other than Tarski monsters. Indeed, they proved that Thompson's group $V$ is $\frac{3}{2}$-generated. 
	 More generally, they proved that all  Higman–Thompson groups $V_n$ (see \cite{Higman}) and all Brin–Thompson groups $nV$ (see \cite{Brin}) are $\frac{3}{2}$-generated.
	 In 2020, Cox constructed two more examples of infinite $\frac{3}{2}$-generated groups. The groups constructed by Cox have finite spread greater than $2$.  Note that it is still unknown if there is an infinite group with spread exactly one. Similarly, other than the Tarski-monsters constructed by Olshanskii \cite{O}, there are no known examples of  infinite non-cyclic groups with infinite spread.
	 In 2022, Bleak, Harper and Skipper proved that Thompson's group $T$ is $\frac{3}{2}$-generated and that its uniform spread satisfies $u(T)\geq 1$. 
	 
	 In this paper, we study Thompson's group $F$. Recall that Thompson's group $F$ is the group of all piecewise-linear homeomorphisms  of the interval $[0,1]$ with finitely many breakpoints where all breakpoints are dyadic fractions (i.e., numbers from $\mathbb{Z}[\frac{1}{2}]\cap (0,1)$) and all slopes are integer powers of $2$.
	 
	  Thompson's group $F$ is $2$-generated and 
	 the abelianization $F/[F,F]$ is isomorphic to $\mathbb{Z}^2$. The standard abelianization map  $\pi\colon F\to \mathbb{Z}^2$ maps every function $f\in F$ to $$\pi(f)=(\log_2(f'(0^+)),\log_2(f'(1^-)))$$ (see \cite{CFP}). 
	 Since the abelianization of $F$ is $\mathbb{Z}^2$, Thompson's group $F$ cannot be $\frac{3}{2}$-generated. 
	  However, we recently proved that $F$ is ``almost'' $\frac{3}{2}$-generated in the sense that every element of $F$ whose image in the ablianization $\Ztwo$ is part of a generating pair of $\Ztwo$ has a co-generator in $F$. Hence, it is natural to study modified versions of the spread and uniform spread for Thompson's group $F$. 
	  
	  Note that for $k$ elements $x_1,\dots,x_k\in F$ to have a common co-generator in $F$, the images $\pi(x_1),\dots,\pi(x_k)$ must have a common co-generator in the abelianization $\Ztwo$. Moreover, for the elements $x_1,\dots,k_k$ to have a common co-generator in $F$ that is a conjugate of some element $g$, the element $\pi(g)$ must be a common co-generator of  $\pi(x_1),\dots,\pi(x_k)$ (and in particular, $\pi(g)$ must be part of a generating pair of $\Ztwo$).
	  
	  \begin{definition}\label{def}
	  	Let $G$ be a $2$-generated group and let $A=G/[G,G]$. Let  $\pi_G\colon G\to A$ be the abelianization map. We make the following definitions. 
	\begin{enumerate}
  		\item The \emph{semi-spread} of $G$ is the supremum over all integers $k$ such that every $k$ elements $x_1,\dots,x_k\in G$ such that $\pi_G(x_1),\dots,\pi_G(x_k)$ have a common co-generator in $A$, have a common co-generator in $G$. 
	  		\item The \emph{uniform semi-spread} of $G$ is the supremum over all integers $k$ such that the following holds. 
	  		For every $a\in A$ that is part of a generating pair of $A$ there exists $g\in G$ which satisfies the following conditions. 
	  		\begin{enumerate}
	  			\item[(i)] $\pi_G(g)=a$. 
	  			\item[(ii)] Every $k$ elements $x_1,\dots,x_k\in G$ such that $\pi_G(g)$ is a common co-generator of $\pi_G(x_1),\dots,\pi_G(x_k)$ have a common co-generator in $G$ that is  a conjugate of $g$.  
	  		\end{enumerate}
 	  	\end{enumerate}	  	
	  \end{definition}

	In \cite{G-32}, we proved that the semi-spread of Thompson's group $F$ is $\geq 1$. In this paper, we prove that the semi-spread of $F$ is $2$ and that the uniform semi-spread of $F$ is $1$.
	We also prove that there is essentially one obstruction preventing  the semi-spread and the uniform semi-spread of Thompson's group $F$ from  being infinite.

	 More generally,  
	  we study the problem of which $k$-tuples of non-trivial elements in $F$ have a common co-generator and under which conditions the common co-generator can be chosen from a predetermined conjugacy class.

	  
	  
	  
	  
	 Let $S$ be a finite subset of $F$ and let $g\in F$. Assume that  $\pi(g)$ is a common co-generator of the elements of $\pi(S)$ in $\Ztwo$.
	  We are interested in conditions under which the elements of $S$  have a common co-generator in $F$ that is a conjugate of $g$. Let us begin with the following lemma. 
	  
	    \begin{lemma}\label{lem:only obstruction}
	    Let $S$ be a finite subset of $F$ and let $g\in F$  be such that $\pi(g)$ is a common co-generator of the elements of $\pi(S)$ in $\Ztwo$. 
	  	Assume that the following conditions hold.
	  	\begin{enumerate}
	  		\item[(1)] The element $g$ fixes some number  $\alpha\in (0,1)$. 
	  		\item[(2)] Every number $\beta\in (0,1)$ is fixed by some element of $S$. 
	  	\end{enumerate}
	  	Then the elements of $S$ do not have a common co-generator in $F$ that is a conjugate of $g$.
	  \end{lemma}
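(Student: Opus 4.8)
The plan is to argue by contradiction. Suppose, towards a contradiction, that the elements of $S$ \emph{do} have a common co-generator in $F$ that is a conjugate of $g$; write it as $h=fgf^{-1}$ for some $f\in F$. By hypothesis (1), $g$ fixes some $\alpha\in(0,1)$, so $h$ fixes $\gamma:=f(\alpha)$. Since every element of $F$ is a homeomorphism of $[0,1]$ fixing the endpoints, $f$ maps $(0,1)$ onto itself, and hence $\gamma\in(0,1)$.

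Next I would apply hypothesis (2) to the point $\beta=\gamma$: there is an element $x\in S$ with $x(\gamma)=\gamma$. Thus both $x$ and $h$ belong to the point-stabilizer $\mathrm{Stab}_F(\gamma)=\{\,k\in F: k(\gamma)=\gamma\,\}$, which is a subgroup of $F$; therefore $\langle x,h\rangle\le \mathrm{Stab}_F(\gamma)$.

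It then remains to observe that $\mathrm{Stab}_F(\gamma)$ is a \emph{proper} subgroup of $F$ for every $\gamma\in(0,1)$. This is immediate: there is an element of $F$ with no fixed point in $(0,1)$ --- for instance the standard generator $x_0$, given by $x_0(t)=t/2$ on $[0,\tfrac12]$, $x_0(t)=t-\tfrac14$ on $[\tfrac12,\tfrac34]$, and $x_0(t)=2t-1$ on $[\tfrac34,1]$, which satisfies $x_0(t)<t$ for all $t\in(0,1)$ --- so $x_0\notin \mathrm{Stab}_F(\gamma)$. Consequently $\langle x,h\rangle\le \mathrm{Stab}_F(\gamma)\subsetneq F$, contradicting the requirement that $h$ be a co-generator of $x\in S$, i.e.\ that $\langle x,h\rangle=F$. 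This contradiction establishes the lemma.

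A few remarks on the proof plan. The hypothesis that $\pi(g)$ is a common co-generator of the elements of $\pi(S)$ in $\mathbb{Z}^2$ is not actually used in the argument above; it is recorded only so that the statement is not vacuous, since $\pi(h)=\pi(g)$ for any conjugate $h$ of $g$ and $\pi$ is a surjective homomorphism, so without it no conjugate of $g$ could be a common co-generator of $S$ already at the level of abelianizations. I do not anticipate a genuine obstacle here: the entire content is the observation that any conjugate of $g$ fixes an interior point of $(0,1)$, which by (2) is simultaneously fixed by some member of $S$, and this traps the subgroup they generate inside a proper point-stabilizer of $F$.
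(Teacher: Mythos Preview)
Your proof is correct and follows essentially the same approach as the paper's: any conjugate of $g$ fixes some interior point, which by (2) is also fixed by some element of $S$, so the two lie in a proper point-stabilizer and hence cannot generate $F$. One minor remark: the function you call $x_0$ is actually the inverse of the paper's $x_0$ (the paper's generator satisfies $x_0(t)>t$ on $(0,1)$), but this does not affect the argument since either one has no fixed points in $(0,1)$.
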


	  \begin{proof}
	  	Let $\sigma \in F$. It suffices to prove that 
	  	$g^\sigma$ (i.e., the element $\sigma^{-1}g\sigma$) is not a common co-generator of the elements of $S$ in $F$. 
	  	Since $g$ fixes some   $\alpha\in (0,1)$, the element $g^\sigma$  fixes the number $\sigma(\alpha)\in (0,1)$. 	By Condition (2) there is an element  $f\in S$ which also fixes $\sigma(\alpha)$. Hence, $\la f,g^\sigma\ra\neq F$. 
	  \end{proof}
  
  The main result of this paper is that the converse of Lemma \ref{lem:only obstruction} holds.
  
   \begin{theorem}\label{thm:most general}
   	Let $S$ be a finite subset of $F$ and let $g\in F$. Assume that $\pi(g)$ is a common co-generator of the elements of $\pi(S)$ in $\Ztwo$. Then the elements of $S$ have a common co-generator in $F$ that is a conjugate of $g$ if and only if at least one of the following conditions holds.  
  	\begin{enumerate}
  		\item[(1)] The element $g$ does not fix any number $\alpha\in (0,1)$. 
  		\item[(2)] There is a number $\beta\in (0,1)$ that is not fixed by any of the elements in $S$. 
  	\end{enumerate}
  \end{theorem}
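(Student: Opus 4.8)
The plan is as follows.

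The ``only if'' direction is precisely the contrapositive of Lemma~\ref{lem:only obstruction}: if neither (1) nor (2) holds then $g$ fixes some $\alpha\in(0,1)$ while every $\beta\in(0,1)$ is fixed by some element of $S$, so Lemma~\ref{lem:only obstruction} gives that the elements of $S$ have no common co-generator conjugate to $g$. So assume now that (1) or (2) holds; the task is to produce $\sigma\in F$ with $\la f,g^\sigma\ra=F$ for every $f\in S$. As conjugation acts trivially on the abelianization, $\pi(g^\sigma)=\pi(g)$ co-generates each $\pi(f)$, so $\pi(\la f,g^\sigma\ra)=\Ztwo$ automatically; since $F/[F,F]\cong\Ztwo$, any subgroup of $F$ containing $[F,F]$ and surjecting onto the abelianization is all of $F$, so it suffices to arrange $[F,F]\le\la f,g^\sigma\ra$ for each $f\in S$. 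Writing $F_I$ for the subgroup of elements of $F$ supported in a dyadic interval $I$, we have $[F,F]=\bigcup_{I\subsetneq(0,1)}F_I$ (a directed union over dyadic $I$, because an element of $[F,F]$ has trivial germs at $0$ and $1$), and $F_J\le uF_Iu^{-1}$ whenever $J\subseteq u(I)$. Hence the problem reduces to finding a single dyadic interval $[p,q]\subsetneq(0,1)$ and a single $\sigma\in F$ so that for every $f\in S$: (a) $F_{[p,q]}\le\la f,g^\sigma\ra$, and (b) each dyadic $J\subsetneq(0,1)$ lies in $u([p,q])$ for some $u\in\la f,g^\sigma\ra$.

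The construction of $\sigma$ and the verification of (b) are where hypotheses (1)/(2) enter. In Case (1), $g$ — hence $g^\sigma$ for any $\sigma$ — has no fixed point in $(0,1)$; say $g(x)>x$ there, the opposite sign being symmetric. If $[p,q]$ is chosen so that the consecutive translates $(g^\sigma)^k([p,q])$, $k\in\BZ$, overlap, then these translates cover $(0,1)$ (their endpoints are cofinal at $0$ and at $1$, $g^\sigma$ being fixed-point-free), so once (a) holds $\la f,g^\sigma\ra$ contains all the $F_{(g^\sigma)^k([p,q])}$; as overlapping $F_I$'s generate $F_{\bigcup I}$, this already gives $F_{(0,1)}=[F,F]$, and Case (1) is entirely reduced to (a). In Case (2), pick $\beta\in(0,1)$ fixed by no element of $S$; since $\bigcup_{f\in S}\mathrm{Fix}(f)$ is closed, some dyadic interval $[c,d]\subsetneq(0,1)$ near $\beta$ is disjoint from every $\mathrm{Fix}(f)$. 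Choose $\sigma$ carrying the part of $\mathrm{Fix}(g)$ interior to $(0,1)$ sparsely into $[c,d]$; the remaining part of $\mathrm{Fix}(g)$, if any, is attached to an endpoint of $[0,1]$ (this happens only when $g$ has trivial germ there) and is kept in a tiny neighbourhood of that endpoint — which is harmless, since the hypothesis that $\pi(g)$ co-generates each $\pi(f)$ forces the corresponding coordinate of every $\pi(f)$ to be $\pm1$ when that coordinate of $\pi(g)$ is $0$, so every $f\in S$ has a nontrivial germ there and moves all nearby points. Thus $\mathrm{Fix}(g^\sigma)\cap(0,1)$ becomes sparse and confined to finitely many zones free of $f$-fixed points for all $f\in S$; we place $[p,q]$ inside a maximal interval $C_0$ of $(0,1)$ on which $g^\sigma$ is fixed-point-free. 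Then (a) gives $F_{C_0}\le\la f,g^\sigma\ra$ as in Case (1), and for a given $f\in S$ one gets (b) by conjugating $F_{C_0}$ by suitable powers of $f$ — which, having no fixed point in the zones containing $\mathrm{Fix}(g^\sigma)$, carry the current interval strictly past any fixed interval of $g^\sigma$ — and of $g^\sigma$, building a chain of overlapping $F_I$'s up to $F_{(0,1)}=[F,F]$; some routine care with push-directions and with the intervals near $0$ and $1$ is needed.

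The remaining and central point is (a): one $\sigma$ and one dyadic $[p,q]$ with $F_{[p,q]}\le\la f,g^\sigma\ra$ for \emph{every} $f\in S$ at once. For a single $f$ this is essentially the content of \cite{G-32} (where the semi-spread of $F$ was shown to be $\ge1$), and the proof there supplies the needed generation machinery: roughly, if two elements of $F$ have images generating $\Ztwo$ and one of them is fixed-point-free (and suitably expansive) on a dyadic interval $[p,q]$, with no common fixed point with the other there, then the two generate at least $F_{[p,q]}$. Using the abundance of conjugators in $F$ we put $g^\sigma$ in such good position relative to a prescribed small dyadic $[p,q]$ — possible exactly because, by the previous step, $[p,q]$ can be taken inside a moving component of $g^\sigma$ — and then, as $S$ is finite and the conditions the machinery imposes on each pair $(f,g^\sigma)$ are robust under modifications of $\sigma$ supported off $[p,q]$, a single $\sigma$ can be arranged to work for all $f\in S$ simultaneously. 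I expect this final coordination step — from ``one conjugator for each $f$'' to ``one conjugator for all of $S$'' — to be the main obstacle; it should be handled by a finite-intersection argument exploiting the flexibility of the conjugation action of $F$ on configurations of dyadic intervals. In particular, the case $|S|=1$ recovers and slightly sharpens the main theorem of \cite{G-32}.
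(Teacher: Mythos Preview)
Your ``only if'' direction is fine and matches the paper. For the ``if'' direction, however, the proposal has a genuine gap at the point you yourself flag as central: step (a), the claim that $F_{[p,q]}\le\la f,g^\sigma\ra$. You assert that ``if two elements of $F$ have images generating $\Ztwo$ and one of them is fixed-point-free (and suitably expansive) on $[p,q]$, with no common fixed point with the other there, then the two generate at least $F_{[p,q]}$.'' This is not what \cite{G-32} proves, and it is not clear that it is true in this generality. The arguments in \cite{G-32} and in the present paper never produce $F_{[p,q]}$ inside $H=\la f,g^\sigma\ra$ directly; they show instead that $[F,F]\le\Cl(H)$ and then invoke Theorem~\ref{gen}, which says $H=F$ iff $\pi(H)=\Ztwo$ and $[F,F]\le\Cl(H)$. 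Passing through the closure is essential: one can easily arrange $g^\sigma$ fixed-point-free and expansive on $[p,q]$, sharing no fixed point with $f$ there, while $\la f,g^\sigma\ra$ is still a proper subgroup of $F$ not containing any $F_I$. Since your bootstrapping in (b) rests on (a), the whole argument collapses.

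The paper's route is quite different and avoids this difficulty. Rather than trying to capture any $F_I$, it works with the equivalence relation $\sim_{\Cl(H)}$ on finite binary words and aims to show that all words in $\mathcal B'$ become equivalent (Lemma~\ref{lem:all inner words identified}). The uniform choice of $\sigma$ is made combinatorially: Lemmas~\ref{lem:obvious} and~\ref{lem:incomparable} produce, for each $f_i\in S$, incomparable words $u_i,v_i$ with $u_i\sim_{\la f_i\ra}v_i$; one builds a single finite binary tree $T$ containing all the branches $u_i,v_i0,v_i1$, and then chooses $\sigma$ (via Lemma~\ref{lem:con}) so that $g^\sigma$ shifts the inner branches of $T$ cyclically, $w_j\to w_{j+1}$. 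This one $\sigma$ works for every $f_i$ at once, because $u_i,v_i0,v_i1$ are among the $w_j$'s, giving $u_i\sim u_i0\sim u_i1$; a fundamental-domain argument (Lemma~\ref{lem:fun_dom}) then propagates the equivalence to all of $\mathcal B'$. The case split is by whether $g$ has an orbital $(0,1)$ (Proposition~\ref{cd<0}), whether $g$ fixes a point but has nontrivial germ at both endpoints (Proposition~\ref{cd neq 0}), or whether one germ is trivial (Proposition~\ref{prop:cd=0}); your sketch conflates the last two and underestimates the care needed when $cd=0$.
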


%

%
%
%

Now, let $S$ be a finite subset of $F$ and assume that $(c,d)$ is a common co-generator of the elements of $\pi(S)$ in $\Ztwo$.  
 If $|cd|\neq 1$, then by Lemma \ref{lem:condition 2} below,  there is a number $\beta \in (0,1)$ that is not fixed by any of the elements in $S$ (i.e., Condition (2) of Theorem \ref{thm:most general} holds). Hence, in that case, by Theorem \ref{thm:most general}, for every $g\in F$ such that $\pi(g)$ 
 is a common co-generator of the elements of $\pi(S)$ in $\Ztwo$,
 there exists $\sigma\in F$ such that $g^\sigma$ is a common co-generator of the elements of $S$ in $F$. 
  
If $cd=-1$, then there is an element $g\in F$ such that $\pi(g)=(c,d)$ and such that $g$ does not fix any $\alpha\in (0,1)$ (see Remark \ref{rem:cd<0} below). Then, by Theorem \ref{thm:most general}, for every such element $g$, the elements of $S$ have a common co-generator in $F$ that is a conjugate of $g$. 

Finally, if $cd=1$, then by Remark \ref{rem:cd>0} below, every element $g\in F$ such that $\pi(g)=(c,d)$ fixes some number  $\alpha\in (0,1)$ (i.e., does not satisfy Condition (1) from Theorem \ref{thm:most general}). Hence, if the elements of $\pi(S)$ do not have another common co-generator $(c_1,d_1)$ such that $c_1d_1\neq 1$, then the elements of $S$ have a common co-generator in $F$ if and only if Condition (2) from Theorem \ref{thm:most general} holds for $S$. 
Thus, we have the following. 



\begin{corollary}\label{cor:spread}
	Let $S$ be a finite subset of $F$ such that the elements of $\pi(S)$ have a common co-generator in $\Ztwo$. Then the elements of $S$ do not have a common co-generator in $F$ if and only if the following assertions hold. 
	\begin{enumerate}
		\item The only co-generators of the elements of $S$ in $\Ztwo$ are $(1,1)$ and $(-1,-1)$. 
		\item Every number $\beta\in (0,1)$ is fixed by some element of $S$.
	\end{enumerate}
\end{corollary}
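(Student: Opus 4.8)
The plan is to deduce Corollary~\ref{cor:spread} directly from Theorem~\ref{thm:most general}, using the three auxiliary statements referred to above: Lemma~\ref{lem:condition 2} (if the elements of $\pi(S)$ have a common co-generator $(c,d)$ in $\Ztwo$ with $|cd|\neq 1$, then Condition~(2) of Theorem~\ref{thm:most general} holds for $S$), Remark~\ref{rem:cd<0} (if $cd=-1$ then there is $g\in F$ with $\pi(g)=(c,d)$ that fixes no point of $(0,1)$), and Remark~\ref{rem:cd>0} (if $cd=1$ then every $g\in F$ with $\pi(g)=(c,d)$ fixes some point of $(0,1)$). I would first record the elementary description of the relevant elements of $\Ztwo$: a pair $(c,d)$ is part of a generating pair of $\Ztwo$ iff $\gcd(c,d)=1$, and for such a pair exactly one of the cases $cd=1$ (i.e.\ $(c,d)\in\{(1,1),(-1,-1)\}$), $cd=-1$ (i.e.\ $(c,d)\in\{(1,-1),(-1,1)\}$), $|cd|\neq 1$ occurs; I would also note that $(1,1)$ is a common co-generator of the elements of $\pi(S)$ in $\Ztwo$ iff $(-1,-1)$ is, both being equivalent to $|a-b|=1$ for every $(a,b)\in\pi(S)$.

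For the ``if'' direction I would assume (1) and (2) and argue by contradiction: if some $h\in F$ were a common co-generator of the elements of $S$ in $F$, then $\pi(h)$ would be a common co-generator of the elements of $\pi(S)$ in $\Ztwo$, so by (1) $\pi(h)=(c,d)$ with $cd=1$; Remark~\ref{rem:cd>0} then gives that $h$ fixes some point of $(0,1)$, so Condition~(1) of Theorem~\ref{thm:most general} fails for $g:=h$, while Condition~(2) of Theorem~\ref{thm:most general} fails for $S$ by assumption~(2). Theorem~\ref{thm:most general}, applied with $g=h$, would then say that no conjugate of $h$ is a common co-generator of $S$ in $F$ --- contradicting that $h$ itself is one. (Equivalently, one can invoke Lemma~\ref{lem:only obstruction} directly here.)

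For the ``only if'' direction I would assume that the elements of $S$ have no common co-generator in $F$, while $\pi(S)$ does have a common co-generator in $\Ztwo$, and show (1) and (2). If (2) of the corollary failed, there would be $\beta\in(0,1)$ fixed by no element of $S$, i.e.\ Condition~(2) of Theorem~\ref{thm:most general} would hold; taking any $g$ with $\pi(g)$ a common co-generator of $\pi(S)$, Theorem~\ref{thm:most general} would give a common co-generator of $S$ in $F$, a contradiction, so (2) holds. Now suppose the elements of $\pi(S)$ had a common co-generator $(c_1,d_1)$ with $(c_1,d_1)\notin\{(1,1),(-1,-1)\}$, i.e.\ $c_1 d_1\neq 1$. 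If $c_1 d_1=-1$, Remark~\ref{rem:cd<0} supplies $g\in F$ with $\pi(g)=(c_1,d_1)$ fixing no point of $(0,1)$, so Condition~(1) of Theorem~\ref{thm:most general} holds for $g$; if $|c_1 d_1|\neq 1$, Lemma~\ref{lem:condition 2} shows Condition~(2) of Theorem~\ref{thm:most general} holds for $S$ and I take $g$ with $\pi(g)$ any common co-generator of $\pi(S)$. Either way Theorem~\ref{thm:most general} yields a common co-generator of $S$ in $F$, a contradiction; hence every common co-generator of $\pi(S)$ in $\Ztwo$ lies in $\{(1,1),(-1,-1)\}$, and since at least one exists and $(1,1),(-1,-1)$ are co-generators simultaneously, both are, which is exactly~(1).

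Since Theorem~\ref{thm:most general} carries all the content, there is no real obstacle in this argument; the only thing requiring attention is the case split according to the value of $cd$ for a co-generator $(c,d)$ of $\pi(S)$ --- matching the cases $|cd|\neq 1$, $cd=-1$, $cd=1$ to Lemma~\ref{lem:condition 2}, Remark~\ref{rem:cd<0}, Remark~\ref{rem:cd>0} respectively --- together with the small observation that $(1,1)$ and $(-1,-1)$ are common co-generators of $\pi(S)$ at the same time, which is what upgrades ``every co-generator lies in $\{(1,1),(-1,-1)\}$'' to Condition~(1) as literally stated.
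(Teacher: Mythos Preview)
Your proof is correct and follows essentially the same route as the paper: the corollary is deduced from Theorem~\ref{thm:most general} via the case split on $cd$ for a common co-generator $(c,d)$ of $\pi(S)$, using Lemma~\ref{lem:condition 2}, Remark~\ref{rem:cd<0}, and Remark~\ref{rem:cd>0} in the respective cases. The paper presents this as the discussion immediately preceding the corollary rather than as a formal proof, but the content is the same; your only addition is the small observation that $(1,1)$ and $(-1,-1)$ are co-generators of $\pi(S)$ simultaneously, which the paper leaves implicit.
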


%
%


	   \begin{remark}\label{rem:3 elements}
	Let $S$ be a subset of $F$ which satisfies Conditions (1) and (2) from Corollary \ref{cor:spread}. Then for every $f\in S$ there exists $n\in\mathbb{Z}$ such that $\pi(f)=(n,n\pm 1)$. Moreover, there exist at least three distinct elements $f_1,f_2,f_3\in S$ such that $\pi(f_1)=(0,\pm 1)$, $\pi(f_2)=(\pm 1,0)$ and such that $\pi(f_3)\notin\{(0,\pm 1),(\pm 1,0)\}$. 
\end{remark}

\begin{proof}
	Let $f\in S$ and let $\pi(f)=(n,m)$ for some $n,m\in\mathbb{Z}$. Since $S$ satisfies Condition (1) from Corollary \ref{cor:spread}, the element $(1,1)$ is a co-generator of $(n,m)$ in $\Ztwo$. Hence, $|n\cdot 1-m\cdot 1|=1$. It follows that $m=n\pm 1$, so $\pi(f)=(n,n\pm 1)$, as claimed. 	   	Hence, it suffices to prove that there are elements $f_1,f_2,f_3\in S$ as described. 
	
	First, we claim that there is an element $f_1\in F$ which has trivial slope at $0^+$. Indeed, if every element in $S$ has non-trivial slope at $0^+$, then there is a small right neighborhood $I_0$ of $0$, such that every element of $S$ acts linearly with  non-trivial slope in $I_0$ and as such does not fix any $\alpha\in I_0\setminus\{0\}$, in contradiction to Condition (2) from Corollary \ref{cor:spread}. 
	Hence, there must exist an element $f_1\in S$ such that the first coordinate of $\pi(f_1)$ is $0$. Then from the first claim in the lemma it follows that $\pi(f_1)=(0,\pm 1)$.
	Similarly, one can prove that there is an element $f_2\in S$ such that the slope of $f_2$ at $1^-$ is trivial. It follows that $\pi(f_2)=(\pm 1,0)$.   	
	Finally, if $\pi(f_1)=(0,\pm 1)$ and $\pi(f_2)=(\pm 1,0)$ are the only elements in $\pi(S)$ then $(1,-1)$ is a common co-generator of the elements of $\pi(S)$ in $\Ztwo$, in contradiction to Condition (1) from Corollary \ref{cor:spread}. Hence,
	there exists $f_3\in S$ such that $\pi(f_3)\neq (0,\pm 1),(\pm 1,0)$. 
\end{proof}

Remark \ref{rem:3 elements} shows that subsets $S\subseteq F$ which satisfy  Conditions (1) and (2) from  Corollary \ref{cor:spread} are subject to several constraints and  must contain at least $3$ distinct elements. 
Note that there exist subsets  
$S\subseteq F$ of size $3$ which satisfy the conditions from Corollary \ref{cor:spread}. Indeed, let $f_1,f_2,f_3\in F$ be elements such that $\pi(f_1)=(1,0)$, $\pi(f_2)=(0,1)$, $\pi(f_3)=(2,1)$ and such that $f_1$ fixes the interval $[\frac{1}{2},1]$ pointwise and $f_2$ fixes the interval $[0,\frac{1}{2}]$ pointwise. Then $S=\{f_1,f_2,f_3\}$ satisfies  Conditions (1) and (2) from Corollary \ref{cor:spread}. Hence, the minimal size of a subset $S\subseteq F$ which satisfies Conditions (1) and (2) from  Corollary \ref{cor:spread} is $3$.

Recall that by Corollary \ref{cor:spread}, if $S\subseteq F$ is a finite subset such that the elements of $\pi(S)$ have a common co-generator in $\Ztwo$ then the elements of $S$ have a common co-generator in $F$ if and only if 
Condition (1) or Condition (2) from the corollary does not hold.
Thus, Corollary  \ref{cor:spread} describes  
the semi-spread of Thompson's group $F$. 
Indeed, in view of Remark \ref{rem:3 elements}, Corollary \ref{cor:spread} implies that  any two elements $f_1,f_2\in F$ such that $\pi(f_1),\pi(f_2)$ have a common co-generator in $\Ztwo$, have a common co-generator in $F$. 
The above discussion also implies that 
this is no longer true for every $3$ elements in $F$. Hence, the semi-spread of $F$ is $2$. However, if we restrict our attention to finite subsets $S\subseteq F$ such that the elements of $\pi(S)$ have a common co-generator $(c,d)$ in $\Ztwo$ such that $cd\neq 1$, then the ``semi-spread'' of $F$ can be considered to be infinite.

Recall that by Theorem \ref{thm:most general}, if $S$ is a finite subset of $F$ such that there is a number $\beta\in (0,1)$ that is not fixed by any of the elements in $S$ and $g\in F$ is such that $\pi(g)$ is a common co-generator of the elements of $\pi(S)$ in $\Ztwo$, then the elements of $S$ have a common co-generator in $F$ that is a conjugate of $g$. In particular, since every non-trivial element $f\in F$ does not fix some number in $(0,1)$, we have the following. 

%
%
%
%
%
%
%
%

\begin{corollary}\label{cor:uni1}
	Let $f,g\in F$ be such that $\{\pi(f),\pi(g)\}$ is a generating set of $\Ztwo$. Then there is an element $\sigma\in F$ such that $\{f,g^\sigma\}$ is a generating set of $F$. 
\end{corollary}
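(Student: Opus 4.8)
The plan is to obtain Corollary \ref{cor:uni1} as the special case of Theorem \ref{thm:most general} in which the finite subset $S$ is the singleton $\{f\}$, so that ``common co-generator of the elements of $S$'' simply means an element $h$ with $\la f,h\ra = F$.

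First I would record two elementary consequences of the hypothesis that $\{\pi(f),\pi(g)\}$ is a generating set of $\Ztwo$. The first is that $\pi(f)\neq (0,0)$, so $f$ is a non-trivial element of $F$; since $F$ acts faithfully by homeomorphisms on $(0,1)$, this means there is a number $\beta\in(0,1)$ with $f(\beta)\neq\beta$. In other words, the singleton $S=\{f\}$ satisfies Condition (2) of Theorem \ref{thm:most general}: there is a number in $(0,1)$ not fixed by any element of $S$. The second consequence is that $\pi(g)$ is a co-generator of $\pi(f)$ in $\Ztwo$, i.e. $\pi(g)$ is a common co-generator of the elements of $\pi(S)$ in $\Ztwo$, which is exactly the standing hypothesis of Theorem \ref{thm:most general}.

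Having verified these, I would invoke Theorem \ref{thm:most general} directly: since its hypothesis holds and its Condition (2) holds for $S=\{f\}$, the conclusion gives an element $\sigma\in F$ such that $g^\sigma$ is a common co-generator of the elements of $S$ in $F$, that is, $\la f,g^\sigma\ra = F$. This $\sigma$ is precisely the element demanded by the corollary, so the proof is complete.

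I do not expect any genuine obstacle here, as all of the substantive work is contained in Theorem \ref{thm:most general}; the corollary is a direct application to a one-element set. The only point needing an argument is the observation that a non-trivial element of $F$ must move some point of $(0,1)$, which is immediate from the faithfulness of the action of $F$ on the interval.
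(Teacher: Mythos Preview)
Your proposal is correct and matches the paper's approach exactly: the paper derives Corollary~\ref{cor:uni1} from Theorem~\ref{thm:most general} applied to the singleton $S=\{f\}$, observing that a non-trivial element of $F$ must move some point of $(0,1)$ so that Condition~(2) holds. Your additional remark that $\pi(f)\neq(0,0)$ (since $\Ztwo$ is not cyclic) is the justification the paper leaves implicit for why $f$ is non-trivial.
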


Note that Corollary \ref{cor:uni1} shows that the semi-spread of Thompson's group $F$ is $\geq 1$. Indeed, for every $(c,d)\in\Ztwo$ that is part of a generating pair of $\Ztwo$ (i.e., such that $c$ and $d$ are co-prime) and for every $g\in F$ such that $\pi(g)=(c,d)$ we have the following: for every $f\in F$ such that $\pi(g)$ is a co-generator of $\pi(f)$, the element $f$ has a co-generator in $F$ that is a conjugate of $g$. 

\begin{remark}\label{rem:uni_not2}
	Let $f_1,f_2\in F$ be elements such that $\pi(f_1)=(1,0)$, $\pi(f_2)=(0,1)$ and such that $f_1$ fixes the interval $[\frac{1}{2},1]$ pointwise and $f_2$ fixes the interval $[0,\frac{1}{2}]$ pointwise. Let $S=\{f_1,f_2\}$ and note that $(1,1)$ is a common co-generator of the elements of $\pi(S)$ in $\Ztwo$. Note also that every element $g\in F$ such that $\pi(g)=(1,1)$ fixes some number $\alpha\in (0,1)$ (see Remark \ref{rem:cd>0} below) and as such, by Theorem \ref{thm:most general}, the elements of $S$ do not have a common co-generator that is a conjugate of $g$. 
\end{remark}

Remark \ref{rem:uni_not2} shows that the uniform semi-spread of $F$ is smaller than $2$.
 Hence, the uniform semi-spread of $F$ is equal to $1$. However,  the discussion following Theorem \ref{thm:most general} implies the following. 

\begin{corollary}\label{cor:uniform}
	Let $c,d\in\mathbb{Z}$ be co-prime such that $cd\neq 1$. Then there exists $g\in F$ such that $\pi(g)=(c,d)$ and such that the following holds. For every finite subset $S\subseteq F$, such that $(c,d)$ is a common co-generator of the elements of $\pi(S)$ in $\Ztwo$, there exists an element $\sigma\in F$ such that $g^\sigma$ is a common co-generator of the elements of $S$ in $F$. In fact, if $|cd|\neq 1$, the result holds for every $g\in F$ such that $\pi(g)=(c,d)$. If $cd=-1$, the result holds for every $g\in F$ such that $\pi(g)=(c,d)$ and such that $g$ does not fix any number in $(0,1)$. 
\end{corollary}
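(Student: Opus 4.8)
The plan is to read this off from Theorem \ref{thm:most general}, splitting into two cases according to the value of $cd$. Since $c$ and $d$ are coprime and $cd\neq 1$, the integer $cd$ satisfies either $|cd|\neq 1$ (that is, $cd=0$ or $|cd|\geq 2$) or else $cd=-1$; these two cases together exhaust the hypothesis, and they correspond respectively to Conditions (2) and (1) of Theorem \ref{thm:most general}.

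Suppose first that $|cd|\neq 1$. Here I would fix \emph{any} $g\in F$ with $\pi(g)=(c,d)$, which exists because $\pi$ is the (surjective) abelianization map. Let $S\subseteq F$ be finite with $(c,d)$ a common co-generator of the elements of $\pi(S)$ in $\Ztwo$. By Lemma \ref{lem:condition 2}, there is a number $\beta\in(0,1)$ not fixed by any element of $S$, so Condition (2) of Theorem \ref{thm:most general} holds; and $\pi(g)=(c,d)$ is, by hypothesis, a common co-generator of the elements of $\pi(S)$. Theorem \ref{thm:most general} then produces $\sigma\in F$ with $g^\sigma$ a common co-generator of the elements of $S$ in $F$. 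As $g$ was an arbitrary element of $F$ with $\pi(g)=(c,d)$ and $S$ was arbitrary, this establishes the corollary together with the first ``in fact'' clause when $|cd|\neq 1$.

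Suppose next that $cd=-1$. By Remark \ref{rem:cd<0} there exists $g\in F$ with $\pi(g)=(c,d)$ that fixes no number in $(0,1)$; I would fix any such $g$. Then Condition (1) of Theorem \ref{thm:most general} holds for $g$, so for every finite $S\subseteq F$ with $(c,d)$ a common co-generator of the elements of $\pi(S)$ in $\Ztwo$ — again $\pi(g)=(c,d)$ being such a common co-generator — Theorem \ref{thm:most general} yields $\sigma\in F$ with $g^\sigma$ a common co-generator of the elements of $S$ in $F$. This covers the case $cd=-1$ and its ``in fact'' clause.

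There is essentially no obstacle: the corollary is a matter of combining Theorem \ref{thm:most general} with the previously established Lemma \ref{lem:condition 2} and Remark \ref{rem:cd<0}. The one point deserving care is the quantifier order ``$\exists g\ \forall S\ \exists\sigma$'': when $|cd|\neq 1$ the witness $g$ may be chosen freely and before $S$, since Lemma \ref{lem:condition 2} extracts the unfixed point $\beta$ from $S$ alone; when $cd=-1$ one must commit to a fixed-point-free $g$ (supplied by Remark \ref{rem:cd<0}) before quantifying over $S$.
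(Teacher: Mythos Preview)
Your proposal is correct and follows exactly the approach the paper takes: the corollary is deduced in the discussion immediately following Theorem \ref{thm:most general} by applying Lemma \ref{lem:condition 2} when $|cd|\neq 1$ and Remark \ref{rem:cd<0} when $cd=-1$, precisely as you do. Your remark on the quantifier order is a nice clarification but is not needed for the argument to go through.
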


Corollary \ref{cor:uniform} shows that (in the notations of Definition \ref{def}(2)), if we restrict our attention to elements $a=(c,d)\in\Ztwo$ such that $cd\neq 1$, then the ``uniform semi-spread'' of $F$ can be considered to be infinite.

   Note that the results discussed above  show that in a sense it is very easy to generate Thompson's group $F$.  
   Several other results demonstrate (in different ways) the abundance of generating pairs of Thompson's group $F$.
    Recall that in \cite{G-RG}, we proved that in the two  natural probabilistic models studied in \cite{CERT}, a random pair of elements of $F$ generates $F$ with positive probability. 
   In \cite{GS-set} (improving a result from \cite{GGJ}) Sapir and the author proved that Thompson's group $F$ is invariably generated by $2$ elements (i.e., there are $2$ elements $f_1,f_2,\in F$ such that regardless of how each one of them is conjugated, together they generate $F$.)  
   In 2010, Brin proved that
   the free group of rank $2$ is a limit of $2$-markings of Thompson's group $F$ in the space of all $2$-marked groups \cite{B}.
   Brin's result and Lodha's new (and much shorter) proof \cite{L} also demonstrate the abundance of generating pairs of $F$.

   {\bfseries Acknowledgments:} The author would like to thank the anonymous referee of \cite{G-32} for asking (in the terminology used in this paper) if the uniform semi-spread of $F$ is $\geq 1$.

		\section{Preliminaries}\label{s:FT}
	
	\subsection{F as a group of homeomorphisms}
	
	Recall that Thompson group $F$ is the group of all piecewise linear homeomorphisms of the interval $[0,1]$ with finitely many breakpoints where all breakpoints are  dyadic fractions and all slopes are integer powers of $2$.  
	The group $F$ is generated by two functions $x_0$ and $x_1$ defined as follows \cite{CFP}.
	
	\[
	x_0(t) =
	\begin{cases}
		2t &  \hbox{ if }  0\le t\le \frac{1}{4} \\
		t+\frac14       & \hbox{ if } \frac14\le t\le \frac12 \\
		\frac{t}{2}+\frac12       & \hbox{ if } \frac12\le t\le 1
	\end{cases} 	\qquad	
	x_1(t) =
	\begin{cases}
		t &  \hbox{ if } 0\le t\le \frac12 \\
		2t-\frac12       & \hbox{ if } \frac12\le t\le \frac{5}{8} \\
		t+\frac18       & \hbox{ if } \frac{5}{8}\le t\le \frac34 \\
		\frac{t}{2}+\frac12       & \hbox{ if } \frac34\le t\le 1 	
	\end{cases}
	\]
	
	The composition in $F$ is from left to right.

	Every element of $F$ is completely determined by how it acts on the set $\zz$. Every number in $(0,1)$ can be described as $.s$ where $s$ is an infinite word in $\{0,1\}$. For each element $g\in F$ there exists a finite collection of pairs of (finite) words $(u_i,v_i)$ in the alphabet $\{0,1\}$ such that every infinite word in $\{0,1\}$ starts with exactly one of the $u_i$'s. The action of $F$ on a number $.s$ is the following: if $s$ starts with $u_i$, we replace $u_i$ by $v_i$. For example, $x_0$ and $x_1$  are the following functions:

	\[
	x_0(t) =
	\begin{cases}
		.0\alpha &  \hbox{ if }  t=.00\alpha \\
		.10\alpha       & \hbox{ if } t=.01\alpha\\
		.11\alpha       & \hbox{ if } t=.1\alpha\
	\end{cases} 	\qquad	
	x_1(t) =
	\begin{cases}
		.0\alpha &  \hbox{ if } t=.0\alpha\\
		.10\alpha  &   \hbox{ if } t=.100\alpha\\
		.110\alpha            &  \hbox{ if } t=.101\alpha\\
		.111\alpha                      & \hbox{ if } t=.11\alpha\
	\end{cases}
	\]
	where $\alpha$ is any infinite binary word.
	
	The group $F$ has the following finite presentation \cite{CFP}.
	$$F=\la x_0,x_1\mid [x_0x_1^{-1},x_1^{x_0}]=1,[x_0x_1^{-1},x_1^{x_0^2}]=1\ra.$$ 

	\subsection{Elements of F as pairs of binary trees} \label{sec:tree}
	
	Often, it is more convenient to describe elements of $F$ using pairs of finite binary trees (see \cite{CFP} for a detailed exposition). The considered binary trees are rooted \emph{full} binary trees; that is, each vertex is either a leaf or has two outgoing edges: a left edge and a right edge. A  \emph{branch} in a binary tree is a simple path from the root to a leaf. If every left edge in the tree is labeled ``0'' and every right edge is labeled ``1'', then a branch in $T$ has a natural binary label. We rarely distinguish between a branch and its label. A branch in a binary tree is an \emph{inner} branch if it is not the left-most nor the right-most branch of $T$ (i.e., if its label contains both digits ``0'' and ``1'').
	
	Let $(T_+,T_-)$ be a pair of finite binary trees with the same number of leaves. The pair $(T_+,T_-)$ is called a \emph{tree-diagram}. Let $u_1,\dots,u_n$ be the (labels of) branches in $T_+$, listed from left to right. Let $v_1,\dots,v_n$ be the (labels of) branches in $T_-$, listed from left to right. We say that the tree-diagram $(T_+,T_-)$ has the \emph{pair of branches} $u_i\rightarrow v_i$ for $i=1,\dots,n$. The tree-diagram $(T_+,T_-)$ \emph{represents} the function $g\in F$ which takes binary fraction $.u_i\alpha$ to $.v_i\alpha$ for every $i$ and every infinite binary word $\alpha$. We also say that the element $g$ takes the branch $u_i$ to the branch $v_i$.
	For a finite binary word $u$, we denote by $[u]$  the dyadic interval $[.u,.u1^{\mathbb{N}}]$ and by $[u)$ the interval $[.u,.u1^\mathbb{N})$. If $u\rightarrow v$ is a pair of branches of $(T_+,T_-)$, then $g$ maps the interval $[u]$ linearly onto $[v]$. 
	
	A \emph{caret} is a binary tree composed of a root with two children. If $(T_+,T_-)$ is a tree-diagram and one attaches a caret to the $i^{th}$ leaf of $T_+$ and the $i^{th}$ leaf of $T_-$ then the resulting tree diagram is \emph{equivalent} to $(T_+,T_-)$ and represents the same function in $F$. The opposite operation is that of \emph{reducing} common carets. A tree diagram $(T_+,T_-)$ is called \emph{reduced} if it has no common carets; i.e, if there is no $i$ for which the  $i$ and ${i+1}$ leaves of both $T_+$ and $T_-$ have a common father. Every tree-diagram is equivalent to a unique reduced tree-diagram. Thus elements of $F$ can be represented uniquely by reduced tree-diagrams \cite{CFP}.
	The reduced tree-diagrams of the generators $x_0$ and $x_1$ of $F$ are depicted in Figure \ref{fig:x0x1}.
	
	\begin{figure}[ht]
		\centering
		\begin{subfigure}{.55\textwidth}
			\centering
			\includegraphics[width=.55\linewidth]{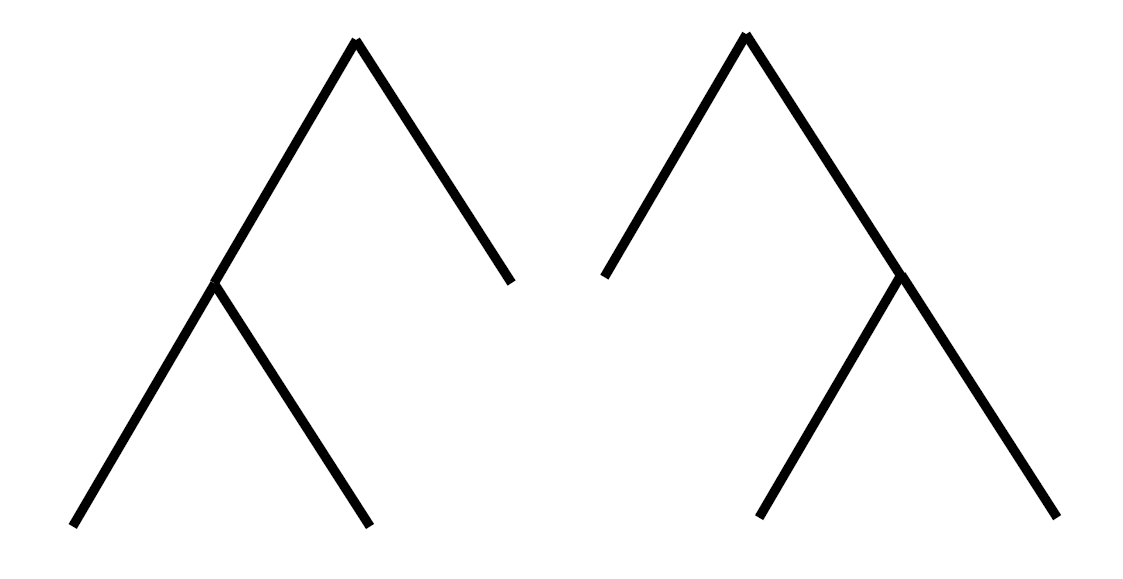}
			\caption{}
			\label{fig:x0}
		\end{subfigure}%
		\begin{subfigure}{.55\textwidth}
			\centering
			\includegraphics[width=.55\linewidth]{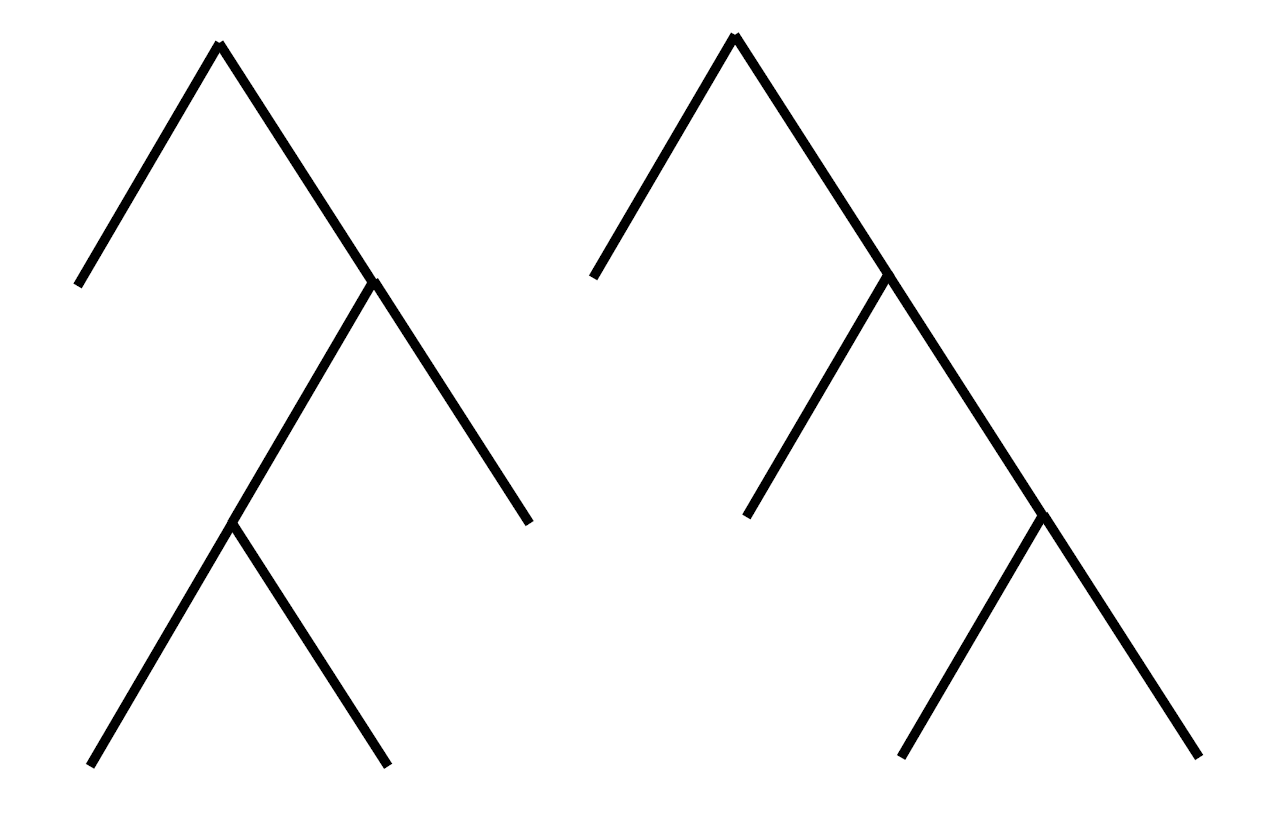}
			\caption{}
			\label{fig:x1}
		\end{subfigure}
		\caption{(A) The reduced tree-diagram of $x_0$. (B) The reduced tree-diagram of $x_1$. In both figures, $T_+$ is on the left and $T_-$ is on the right.}
		\label{fig:x0x1}
	\end{figure}
	
	When we say that a function $f\in F$ has a pair of branches $u_i\rightarrow v_i$, the meaning is that some tree-diagram representing $f$ has this pair of branches. In other words, this is equivalent to saying that $f$ maps the dyadic interval $[u_i]$ linearly onto $[v_i]$.
	Clearly, if $u\rightarrow v$ is a pair of branches of $f$, then for any finite binary word $w$, $uw\rightarrow vw$ is also a pair of branches of $f$. Similarly, if $f$ has the pair of branches $u\rightarrow v$ and $g$ has the pair of branches $v\rightarrow w$ then $fg$ has the pair of branches $u\rightarrow w$.

\subsection{Choosing elements of $F$}

Let $I_1,I_2\subseteq [0,1]$. We will write $I_1\prec I_2$ if for every $x\in I_1$ and $y\in I_2$, we have $x\leq y$. 
In most proofs in this paper, we choose elements with a given set of pairs of branches, or elements which map certain sub-intervals of $[0,1]$ in a predetermined way. In doing so, we usually apply the next lemma or the following remark, often without an explicit reference. 

\begin{lemma}\label{choice}
	Let $[a_1,b_1],\dots,[a_m,b_m]$, $[c_1,d_1],\dots,[c_m,d_m]$ be closed subintervals of $[0,1]$ with endpoints from $\zz$. 
	Assume that $[a_1,b_1]\prec \dots\prec [a_m,b_m]$
	and that $[c_1,d_1]\prec\dots\prec [c_m,d_m]$.
 Assume in addition that the following conditions are satisfied.
	\begin{enumerate}
		\item[(1)] $0\in [a_1,b_1]$ if and only if $0\in [c_1,d_1]$.
		\item[(2)] $1\in [a_m,b_m]$ if and only if $1\in [c_m,d_m]$.
		\item[(3)] for all $i=1,\dots,m-1$, the intervals $[a_i,b_i]$ and $[a_{i+1},b_{i+1}]$ share a boundary point if and only if the intervals $[c_i,d_i]$ and $[c_{i+1},d_{i+1}]$ share a boundary point.
	\end{enumerate}
	Then there is an element $f\in F$ which maps each interval $[a_i,b_i]$, $i=1,\dots,m$ onto the interval $[c_i,d_i]$. In addition, if for some $i$,  $\frac{b_i-a_i}{d_i-c_i}$ is an integer power of $2$ (in particular, if both $[a_i,b_i] $ and $[c_i,d_i]$ are dyadic intervals), then $f$ can be taken to map
	$[a_i,b_i]$ linearly onto $[c_i,d_i]$.
\end{lemma}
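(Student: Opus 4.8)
The plan is to build $f$ piece by piece over the consecutive intervals, reducing everything to the case of mapping a single interval onto another, and then gluing the pieces together along their shared endpoints. First I would handle the degenerate bookkeeping: the hypotheses (1)--(3) together with the orderings $[a_1,b_1]\prec\dots\prec[a_m,b_m]$ and $[c_1,d_1]\prec\dots\prec[c_m,d_m]$ guarantee that the ``gaps'' $(b_i,a_{i+1})$ on the source side are nonempty exactly when the corresponding gaps $(d_i,c_{i+1})$ on the target side are nonempty, that $[0,a_1)$ is nonempty iff $[0,c_1)$ is, and that $(b_m,1]$ is nonempty iff $(d_m,1]$ is. So $[0,1]$ is partitioned on both sides into an alternating sequence of ``mapped'' intervals $[a_i,b_i]\leftrightarrow[c_i,d_i]$ and ``gap'' intervals, with the two partitions having the same combinatorial shape (same number of pieces in the same order), and with all endpoints dyadic. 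It therefore suffices to find, for each consecutive pair of endpoints $p<q$ (dyadic, in $[0,1]$) matched with $p'<q'$ (dyadic, in $[0,1]$), an element of $F$ carrying $[p,q]$ onto $[p',q']$, and then to concatenate these — this is legitimate precisely because an element of $F$ is exactly a homeomorphism of $[0,1]$ that is PL with dyadic breakpoints and power-of-$2$ slopes, and such a property is closed under gluing along dyadic points.

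Next I would prove the single-interval claim: for dyadic $p<q$ and $p'<q'$ in $[0,1]$, there is $g\in F$ with $g([p,q])=[p',q']$. The cleanest way is to use the well-known fact (from \cite{CFP}) that $F$ acts transitively on dyadic points of $(0,1)$, and in fact that for any two increasing $n$-tuples of dyadic points in $[0,1]$ that agree on whether they contain $0$ and whether they contain $1$, there is an element of $F$ carrying one to the other; applying this to the pair $\{p,q\}\mapsto\{p',q'\}$ gives $g$. Alternatively, and more in the spirit of the tree-diagram language of Section~\ref{sec:tree}, I would note that $[p,q]$ is a finite union of standard dyadic intervals $[u]$ forming the leaves of a subtree, $[p',q']$ is likewise a union of standard dyadic intervals $[v]$, and after subdividing we may assume the two lists have the same length; pairing them up left to right and completing to full tree-diagrams $(T_+,T_-)$ (the portions of $[0,1]$ left of $p$ and right of $q$ are themselves unions of standard dyadic intervals, matched to those left of $p'$ and right of $q'$ using conditions (1),(2)) produces the desired $g$. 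Here one only needs that there are \emph{some} such matchings, not canonical ones.

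For the ``in addition'' clause, suppose for some index $i$ the ratio $\rho:=\frac{b_i-a_i}{d_i-c_i}$ is a power of $2$. Having already produced $f$ with $f([a_i,b_i])=[c_i,d_i]$, I would post-compose on $[c_i,d_i]$ with a correction: the affine map $[c_i,d_i]\to[c_i,d_i]$ sending $f|_{[a_i,b_i]}$ to the linear map is not itself in $F$ in general, so instead I would redo the construction on the block $[a_i,b_i]\to[c_i,d_i]$ from scratch, choosing there the \emph{linear} map $t\mapsto c_i+\rho^{-1}(t-a_i)$, which has slope $\rho^{-1}$, an integer power of $2$, and which carries dyadic points to dyadic points since $\rho^{-1}$ is a power of $2$ and $a_i,c_i$ are dyadic; on every other block I keep the piece already built, and reglue. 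Since all the blocks share only dyadic endpoints and on each block the map is PL with dyadic breakpoints and power-of-$2$ slopes, the glued map is again in $F$, and it is linear on $[a_i,b_i]$ as required. In particular, when both $[a_i,b_i]$ and $[c_i,d_i]$ are dyadic intervals their lengths are powers of $2$, so $\rho$ is a power of $2$ and the clause applies.

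The only genuine subtlety — the step I expect to be the main obstacle — is verifying that conditions (1)--(3) are exactly what is needed to make the alternating block decompositions on the two sides combinatorially identical, i.e.\ that no gap appears on one side without appearing on the other; once that is pinned down, everything else is routine gluing and the standard transitivity properties of $F$ on dyadic points. I would therefore spend the bulk of the write-up on that combinatorial matching and treat the PL/dyadic/slope closure under gluing as an immediate consequence of the definition of $F$.
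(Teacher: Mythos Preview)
Your plan is correct and essentially the standard argument. The paper, however, does not give its own proof of this lemma: it simply cites the proof of \cite[Lemma~2.1]{BrinCh} and moves on. So there is nothing to compare your argument against in the present paper beyond that reference.

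Your decomposition into ``mapped'' blocks and ``gap'' blocks, the observation that conditions (1)--(3) force the two block structures to have the same combinatorial shape, and the gluing of piecewise-$F$ maps along dyadic endpoints is exactly the content of Brin's Lemma~2.1 (and its proof), so in substance you have reproduced the cited argument rather than found an alternative route. One small comment: in the ``in addition'' clause you correctly abandon the idea of post-composing with a correction and instead rebuild that block linearly from the start; it may be worth noting explicitly that this can be done independently for each index $i$ where the ratio is a power of $2$, since the blocks interact only at their shared dyadic endpoints and the linear map already sends those endpoints where they need to go.
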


The lemma follows directly from the proof of \cite[Lemma 2.1]{BrinCh}.

\begin{remark}\label{r:choice}
	The proof of Lemma 2.1 in \cite{BrinCh} also implies (in the notations of Lemma \ref{choice}) that if for each $i$ we choose an element $g_i\in F$ which maps $[a_i,b_i]$ onto $[c_i,d_i]$, then there is an element $f\in F$ such that for all $i\in\{1,\dots,m\}$, the restriction of $f$ to $[a_i,b_i]$ coincides with the restriction of $g_i$ to that interval.
\end{remark}

Let $g\in F$. Recall that an interval $(a,b)$ is an \emph{orbital} of $g$ if $g$ fixes the endpoints $a$ and $b$ and does not fix any number in $(a,b)$. Note that if $(a,b)$ is an orbital of $g$, then for every $x\in (a,b)$, we have $g(x)\in (a,b)$. 
Now, assume that $(a,b)$ is an orbital of $g$. Then the graph of the function $g$ (drawn in an $xy$ coordinate system)  intersects the diagonal $y=x$ at $x=a$ and at $x=b$ and does not intersect the diagonal $y=x$ at any $x\in (a,b)$. Hence, in the interval $(a,b)$ the graph of the function is either entirely above the diagonal or entirely below the diagonal. In other words, 
either for every $x\in (a,b)$ we have $g(x)>x$ or for every $x\in (a,b)$ we have $g(x)<x$. In the first case, we say that $(a,b)$ is a \emph{push-up} orbital of $g$, while in the second case we say that $(a,b)$ is a \textit{push-down} orbital of $g$. Note that $(a,b)$ is a push-up orbital of $g$ if and only if $g'(a^+)>1$ if and only if $g'(b^-)<1$. Equivalently, $(a,b)$ is a push-down orbital of $g$ if and only if $g'(a^+)<1$ if and only if $g'(b^-)>1$. In particular, if $(a,b)$ is an orbital of $g$ and $c=\log_2(g'(a^+))$ and $d=\log_2(g'(b^-))$, then $cd<0$.

 Recall that if $f\in F$ then $\pi(f)=(\log_2(f'(0^+)),\log_2(f'(1^-)))$. Hence, we have the following. 

\begin{remark}\label{rem:cd>0}
	Let $f\in F$ be such that $\pi(f)=(c,d)$. If $cd\geq 0$, then the interval $(0,1)$ is not an orbital of $f$. In other words, if  $cd\geq 0$, then $f$ must fix some number in $(0,1)$. 
\end{remark}

Note that the following well-known remark also holds (one can prove it, for example, using Remark \ref{r:choice}).

\begin{remark}\label{rem:cd<0}
	Let $c,d\in\mathbb{Z}$ be such that $cd<0$. Then there exists $f\in F$ such that $\pi(f)=(c,d)$ and such that $f$ does not fix any number in $(0,1)$. 
\end{remark}

The following remarks about orbitals will be used bellow without reference. For proofs and more information on orbitals see, for example, \cite{B-geom}.

\begin{remark}
	Assume that an element $g\in F$ has a push-up (resp. push-down) orbital $(a,b)$ and let $\sigma\in F$. Then $(\sigma(a),\sigma(b))$ is a push-up (resp. push-down) orbital of $g^\sigma$.
\end{remark}

\begin{remark}\label{rem:orb_n}
	Let $(a,b)$ be an orbital of $g\in F$ and let $x,y\in (a,b)$. Then there exists $n\in\mathbb{Z}$ such that $f^n(x)>y$
\end{remark}


The next lemma will be used below for choosing a conjugate of a given element while ensuring that this conjugate has certain pairs of branches. 
The lemma is similar to Lemma 2.8 in \cite{BHS} for choosing conjugates in Thompson's group $T$.

\begin{lemma}\label{lem:con}
	Let $f,g\in F$ and assume that $(a,b)$ and $(c,d)$ are  orbitals of $f$ and $g$, respectively, such  that $(a,b)$ and $(c,d)$ are both push-up orbitals or both push-down orbitals. Let $[x,y]$ be a closed sub-interval of $(a,b)$. Then there exists a function $\sigma\in F$ such that $[x,y]\subseteq (\sigma(c),\sigma(d))$ and such that	
	$g^\sigma$ coincides with $f$ on the interval $[x,y]$. Assume in addition that there  is a closed interval $J$ with dyadic endpoints 
	such that $J\prec (a,b)$ and $J\prec (c,d)$. Then $\sigma$ can be taken to be the identity on $J$. 
	
\end{lemma}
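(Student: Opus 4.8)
\textbf{Proof proposal for Lemma \ref{lem:con}.}

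The plan is to build $\sigma$ in two stages: first match the orbitals $(c,d)$ and $(a,b)$ on a slightly enlarged closed sub-interval $[x',y']\subset(a,b)$ with $[x,y]\subset(x',y')$, and then correct $\sigma$ inside $[x',y']$ so that $g^\sigma$ agrees with $f$ on $[x,y]$. For the first stage, I would pick dyadic points $x',y'$ with $a<x'<x\le y<y'<b$ and apply Lemma \ref{choice} (or Remark \ref{r:choice}) to produce $\sigma_0\in F$ carrying $[x',y']$ onto a closed dyadic sub-interval $[p,q]\subset(c,d)$ chosen so that $\sigma_0$ also respects the required boundary/adjacency conditions; when the optional interval $J$ is present, I would additionally demand $\sigma_0|_J=\mathrm{id}$, which is possible because $J\prec(a,b)$ and $J\prec(c,d)$ let us reserve the portion of $[0,1]$ left of both orbitals for the identity. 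After conjugating by $\sigma_0$ we may therefore assume $[x',y']\subset(c,d)$ as well, i.e.\ reduce to the case where $f$ and $g$ share a common push-up (or push-down) orbital containing $[x',y']$ in its interior.

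The heart of the argument is the second stage, which is the standard ``matching germs along an orbital'' construction for groups acting on the line. Since $(a,b)$ is a push-up orbital of $f$ (the push-down case is symmetric, or obtained by inverting), for any point $z\in(a,b)$ the forward and backward $f$-orbit of $z$ marches monotonically from $a$ to $b$; by Remark \ref{rem:orb_n} applied to both $f$ and $g$, I can choose an integer $n$ and a fundamental domain $[s,f(s)]\subseteq[x',y']$ for $f$ and similarly a fundamental domain for $g$ inside $[x',y']$ (shrinking $[x',y']$ around $[x,y]$ if necessary so that $[x,y]$ is covered by the $f$-orbit of a single small dyadic interval on which we will be free to prescribe $\sigma$). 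The idea is to define $\sigma$ on one fundamental domain $D$ of $g$ by an element of $F$ sending $D$ onto the corresponding fundamental domain of $f$ in a way dictated by where we want $g^\sigma$ to send points — concretely, we want $\sigma^{-1} g \sigma = f$ near $[x,y]$, so on $D$ set $\sigma$ to be any map in $F$ carrying $D$ to an appropriate dyadic sub-interval of $(a,b)$, and then \emph{extend $\sigma$ equivariantly} by the rule $\sigma(g(t))=f(\sigma(t))$ across the translates $g^k(D)$. Because $f$ and $g$ have finitely many breakpoints and all data are dyadic, this equivariant extension closes up after finitely many steps into an honest element of $F$ on a dyadic neighbourhood of $[x,y]$, and then Lemma \ref{choice}/Remark \ref{r:choice} is invoked once more to extend it to all of $[0,1]$ (as the identity on $J$, if required, and respecting the prescribed behaviour elsewhere). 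By construction $g^\sigma=f$ on the chosen neighbourhood of $[x,y]$, hence on $[x,y]$, and $[x,y]\subseteq\sigma_0^{-1}([p,q])\subseteq(\sigma(c),\sigma(d))$ since $\sigma$ maps $(c,d)$ to an orbital of $g^\sigma$ containing $[x,y]$.

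The main obstacle I anticipate is the bookkeeping in the equivariant extension step: one must verify that defining $\sigma$ freely on a single fundamental domain and then propagating by $\sigma\circ g = f\circ\sigma$ yields a \emph{piecewise-linear dyadic} homeomorphism with finitely many breakpoints, rather than something that only converges to a homeomorphism near $a$ and $b$. The resolution is that we never need $\sigma$ to be defined on all of $(a,b)$ compatibly — only on a dyadic neighbourhood of $[x,y]$, which is covered by finitely many translates $g^{-k}(D),\dots,g^{m}(D)$; outside that neighbourhood $\sigma$ is filled in arbitrarily by Lemma \ref{choice}. One must also check the consistency of the two uses of Lemma \ref{choice} (the initial normalization by $\sigma_0$ and the final extension) with the constraint $\sigma|_J=\mathrm{id}$; this is where the hypotheses $J\prec(a,b)$ and $J\prec(c,d)$ are used, guaranteeing that $J$ lies in a region untouched by the orbital-matching and can be assigned the identity in every application of the choice lemma.
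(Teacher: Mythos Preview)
Your proposal is correct and follows essentially the same route as the paper: define $\sigma$ on a single fundamental domain and propagate equivariantly via $\sigma(g(t))=f(\sigma(t))$ over the finitely many $g$-translates needed to cover $[x,y]$, then fill in the rest of $[0,1]$ (and force $\sigma|_J=\mathrm{id}$) via Remark~\ref{r:choice}. The paper omits your first stage entirely---it simply fixes a dyadic $\alpha\in(c,d)$ and directly builds $\sigma$ sending $[g^i(\alpha),g^{i+1}(\alpha)]$ onto $[f^i(x),f^{i+1}(x)]$ for $i=0,\dots,n$ with $f^n(x)>y$---so your preliminary normalization by $\sigma_0$ is unnecessary (and the reduction does not actually make $f$ and $g$ share a common orbital, only overlapping ones), but it is harmless.
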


\begin{proof}
	The proof is almost identical to the beginning of the proof of \cite[Lemma 2.8]{BHS}. We will prove the lemma in the case were both $(a,b)$ and $(c,d)$ are push-up orbitals. 	First, we can assume that $x$ is dyadic. Indeed, if it is not dyadic, we can  replace it by some dyadic number in $(a,x)$. Now, since $x,y\in (a,b)$, and $(a,b)$ is a push-up orbital of $f$, by Remark \ref{rem:orb_n}, there exists $n\in\mathbb{N}$ such that $f^n(x)>y$. 
	 Let $z=f^n(x)$. We will prove that one can define an element $\sigma\in F$ such that $g^\sigma$  coincides with $f$  on the interval $[x,z]$ (and in particular, on the interval $[x,y]$). 
	Let $\alpha\in (c,d)$ be some dyadic fraction. 
	Since $(c,d)$ is a push-up orbital of $g$, the intervals $$[\alpha, g(\alpha)], [g(\alpha),g^2(\alpha)], \dots [g^{n}(\alpha), g^{n+1}(\alpha)]$$
	are consecutive sub-intervals of $(c,d)$ with dyadic endpoints. Similarly, since $(a,b)$ is a push-up orbital of $f$, the intervals 
	$$[x,f(x)],[f(x),f^2(x)],\dots,[f^n(x),f^{n+1}(x)]$$
	are consecutive sub-intervals of $(a,b)$ with dyadic endpoints.
	To define $\sigma$, we first define inductively $n+1$ functions $\sigma_0,\sigma_1\dots,\sigma_n\in F$ such that for each $i\in\{0,\dots,n\}$ the function $\sigma_i$ maps the interval $[g^i(\alpha),g^{i+1}(\alpha))]$ onto the interval $[f^i(x),f^{i+1}(x)]$. 
	For $i=0$, we let $\sigma_0$ be some function in $F$ which maps the interval $[\alpha,g(\alpha)]$ onto the interval $[x,f(x)]$. 
	Now, for each $i=1,\dots,n$, we let $\sigma_i$  be some function in $F$ such that on the interval $[g^i(\alpha),g^{i+1}(\alpha)]$ it coincides with $g^{-1}\sigma_{i-1}f$. 
	
	Note that for each $i\in\{0,\dots,n\}$, the function $\sigma_i$ indeed maps the interval $[g^i(\alpha),g^{i+1}(\alpha)]$ onto the interval $[f^i(x),f^{i+1}(x)]$. Hence, by Remark \ref{r:choice}, we can define a function $\sigma\in F$ such that for each $i\in\{0,\dots,n\}$, the function $\sigma$ coincides on the interval $[g^i(\alpha),g^{i+1}(\alpha)]$ with the function $\sigma_i$. Remark \ref{r:choice} also implies that if there is an interval $J$ as described, then $\sigma$ can be taken to be the identity on $J$. 
	
	Note that by definition, $\sigma$ maps the interval $[\alpha,g^{n+1}(\alpha)]\subseteq (c,d)$ onto the interval $[x,z]\supseteq [x,y]$. Hence, $[x,y]$ is contained in $(\sigma(c),\sigma(d))$. It remains to note that the function  $g^{\sigma}$ coincides with $f$ on the interval $[x,z]=[x,f^n(x)]$, as required. Indeed, let $\beta\in[x,f^n(x)]$, then there exists $i\in\{0,\dots,n-1\}$ such that $\beta\in [f^i(x),f^{i+1}(x)]$. Then (as explained below) we have the following equality.
	
	\begin{equation*}
		\begin{split}
			g^\sigma(\beta) &  =\sigma(g(\sigma^{-1}(\beta)))\\
			& 	= \sigma (g (\sigma_i^{-1}(\beta)) )\\
			&	=\sigma_{i+1}(g (\sigma_i^{-1}(\beta)) )\\
			&	=f(\sigma_{i}(g^{-1}(g(\sigma_i^{-1}(\beta)))))\\
			&=f(\beta)
		\end{split}
	\end{equation*}
	
	Indeed, the second equality  is due to the fact that $\beta\in [f^i(x),f^{i+1}(x)]$.
	Now, since $\beta\in [f^i(x),f^{i+1}(x)]$, we have $\sigma_i^{-1}(\beta)\in [g^i(\alpha),g^{i+1}(\alpha)]$. Hence, $g(\sigma_i^{-1}(\beta))\in [g^{i+1}(\alpha),g^{i+2}(\alpha)]$, which implies the third equality. Finally, the  fourth equality follows from the definition of $\sigma_{i+1}$ on the interval $[g^{i+1}(\alpha),g^{i+2}(\alpha)]$.
	%
	%
	%


\end{proof}

	\subsection{Generating Thompson's group $F$}
	
	Let $H$ be a subgroup of $F$. A function $f\in F$ is said to be a \emph{piecewise-$H$} function if there is a finite subdivision of the interval $[0,1]$ such that on each interval in the subdivision, $f$ coincides with some function in $H$. Note that since all breakpoints of elements in $F$ are dyadic fractions, a function $f\in F$ is a piecewise-$H$ function if and only if there is a  dyadic subdivision of the interval $[0,1]$ into finitely many pieces such that on each dyadic interval in the subdivision, $f$ coincides with some function in $H$. 

	Following \cite{GS,G16}, we define the \emph{closure} of a subgroup $H$ of $F$, denoted $\Cl(H)$, to be the subgroup of $F$ of all piecewise-$H$ functions. A subgroup $H$ of $F$ is \emph{closed} if $H=\Cl(H)$.  In \cite{G16}, the author proved that the generation problem in $F$ is decidable. That is, there is an algorithm that decides given a finite subset $X$ of $F$ whether it generates the whole $F$. A recent improvement of \cite[Theorem 7.14]{G16} is the following theorem. 
	
	\begin{theorem}\cite[Theorem 1.3]{G-max}\label{gen}
		Let $H$ be a subgroup of $F$. Then $H=F$ if and only if the following conditions are satisfied. 
		\begin{enumerate}
			\item[(1)] $H[F,F]=F$.
			\item[(2)] $[F,F]\leq \Cl(H)$. 
		\end{enumerate}
	\end{theorem}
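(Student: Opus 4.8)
The forward implication is trivial: if $H=F$ then $H[F,F]=F$, and $[F,F]\le F=H\le\Cl(H)$. For the converse, assume (1) and (2). Condition (1) says exactly that $\pi(H)=\Ztwo$, and it reduces the problem to showing $[F,F]\le H$: granting that, any $f\in F$ has $\pi(f)=\pi(h)$ for some $h\in H$, whence $fh^{-1}\in\ker\pi=[F,F]\le H$ and $f\in H$.

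First I would record the usable content of each hypothesis. From (1), fix $a,b\in H$ with $\pi(a)=(1,0)$ and $\pi(b)=(0,1)$; then $a$ is the identity on a neighbourhood of $1$ and has slope $2$ at $0^+$, while $b$ is the identity near $0$ and has slope $2$ at $1^-$. From (1) and (2) together, $\Cl(H)$ contains $[F,F]$ and, since it contains $H$, surjects onto $\Ztwo$ under $\pi$; as a subgroup of $F$ containing $[F,F]$ and surjecting onto $\Ztwo$ must equal $F$, we get $\Cl(H)=F$, i.e.\ every element of $F$ is piecewise-$H$ --- this is the point at which condition (2) is genuinely used up. A first consequence: $H$ acts transitively on $\zz\cap(0,1)$, since given dyadic $\alpha,\beta$, Lemma \ref{choice} supplies $f\in F$ with $f(\alpha)=\beta$, and any $h\in H$ agreeing with $f$ on a subdivision piece containing $\alpha$ satisfies $h(\alpha)=\beta$.

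Now recall that $[F,F]$ is precisely the set of elements of $F$ that are the identity near both endpoints, hence the directed union, over dyadic $0<p<q<1$, of the interval subgroups $F_{(p,q)}:=\{f\in F:\supp(f)\subseteq(p,q)\}$; so $[F,F]\le H$ is equivalent to $F_{(p,q)}\le H$ for all such $(p,q)$. These are interrelated by conjugation, $hF_{(p,q)}h^{-1}=F_{(h(p),h(q))}$ for $h\in F$, and --- bootstrapping the point-transitivity above to pairs of dyadic points, which is where some care and the analysis of closures from \cite{G16} enter --- one can transport a fixed small interval subgroup to arbitrarily large ones inside $H$; combined with a standard fragmentation argument (Lemma \ref{choice} and Remark \ref{r:choice}) this reduces the whole theorem to the single assertion that $F_{J_0}\le H$ for one standard dyadic interval, say $J_0=[\tfrac14,\tfrac12]$.

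The main obstacle is exactly this assertion: upgrading ``$g$ is piecewise-$H$'' to ``$g\in H$'' for every $g$ supported in $J_0$. Write $g$ along a dyadic subdivision $0=t_0<t_1<\dots<t_k=1$ refining the endpoints of $J_0$, with $g=h_i$ on $[t_{i-1},t_i]$ for elements $h_1,\dots,h_k\in H$. Since $g$ is the identity near $0$, the element $h_1$ has trivial germ at $0$, but it may behave arbitrarily on $(t_1,1)$ --- in particular it may have nontrivial germ at $1$ --- so the naive telescoping $g=h_1\cdot(h_1^{-1}g)$ does not obviously reduce the number of pieces. The plan is to replace $h_1$ by $h_1b^{-d}$, where $h_1$ has slope $2^d$ at $1^-$ and $b\in H$ has first been conjugated within $H$ (using the point-transitivity above) so as to be the identity on $[0,t_1]$: then $h_1b^{-d}\in H$ still agrees with $g$ on $[0,t_1]$ and lies in $[F,F]$, so $(h_1b^{-d})^{-1}g\in[F,F]$ is the identity on $[0,t_1]$; one then tries to iterate, trimming symmetrically from the right using $a$, and invoking condition (2) again as needed. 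Making this process terminate --- and identifying the base case in which no trimming is needed --- is where the branch and tree-diagram combinatorics developed in \cite{G16} are required. Alternatively, one may bypass the direct argument by checking that conditions (1) and (2) together imply the (more technical) sufficient conditions for $H=F$ established in \cite[Theorem 7.14]{G16}.
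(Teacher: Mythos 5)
First, note that the paper does not prove this statement at all: it is imported verbatim from \cite[Theorem 1.3]{G-max}, so there is no in-paper proof to compare against, and your proposal has to stand on its own. Its preliminary reductions are correct and essentially forced: the trivial forward direction, the reduction of the converse to $[F,F]\leq H$, the observation that (1) and (2) together give $\Cl(H)=F$, and the one-point transitivity of $H$ on $\zz\cap(0,1)$. The first genuine gap comes right after, in the passage from one-point transitivity to the two-point control needed to ``transport a fixed small interval subgroup to arbitrarily large ones'': to get $F_{(p,q)}\leq H$ from $F_{J_0}\leq H$ you need a single element $h\in H$ with $h(J_0)\supseteq [p,q]$, i.e.\ simultaneous control of the images of both endpoints of $J_0$ under one element of $H$. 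Your transitivity argument cannot deliver this: writing an $f\in F$ that moves both points as a piecewise-$H$ function, the two points will in general lie in different pieces of the subdivision, so you only obtain two different elements of $H$, one per point. You flag this with ``some care \dots enter'', but no argument is given, and it is not a routine strengthening of what precedes it.

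The second and decisive gap is the step you yourself call the main obstacle: proving that some interval subgroup is contained in $H$, i.e.\ upgrading ``piecewise-$H$ and supported in $J_0$'' to membership in $H$. The trimming scheme does not make progress: after replacing $h_1$ by $h_1b^{-d}$ and passing to $(h_1b^{-d})^{-1}g$, you have an element of $[F,F]$ which is the identity on $[0,t_1]$ but is still only piecewise-$H$, with the same number of pieces; to iterate you would need an element of $H$ that is the identity on $[0,t_1]$ and agrees with $g$ on $[t_1,t_2]$, i.e.\ you would need to cut off an element of $H$ at a dyadic point inside its support --- which is essentially an instance of the statement being proved. You acknowledge that termination ``is where the branch and tree-diagram combinatorics developed in \cite{G16} are required'', but those arguments are not reproduced, so nothing is actually established. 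The proposed bypass via \cite[Theorem 7.14]{G16} does not close the gap either: that theorem has an additional hypothesis beyond the analogues of (1) and (2), and showing that this extra hypothesis can be dispensed with --- that it is implied by (1) and (2) alone --- is exactly the improvement that constitutes \cite[Theorem 1.3]{G-max}; invoking it merely relocates the missing argument. So the proposal is a reasonable strategic outline, but both of its load-bearing steps are absent.
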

	
	Recall that 
	 $[F,F]$
	is the kernel of the abelianization map $\pi\colon F\to \mathbb{Z}^2$.
	Hence, a subgroup $H\leq F$ satisfies Condition (1) of Theorem \ref{gen} if and only if $\pi(H)=\mathbb{Z}^2$. 
	Note also that as the kernel of $\pi$, the derived subgroup of $F$ 	 can be characterized as the subgroup of $F$ of all functions $f$ with slope $1$ both at $0^+$ and at $1^-$.
	That is, a function $f\in F$ belongs to $[F,F]$ if and only if the reduced (equiv. any) tree-diagram of $f$ has pairs of branches of the form $0^m\rightarrow 0^m$ and $1^n\rightarrow 1^n$ for some $m,n\in\mathbb{N}$.

	\section{Subgroups $H$ of $F$ whose closure contains $[F,F]$}
	
	In the next section, we apply Theorem \ref{gen} to prove that a given subset of $F$ generates $F$. 
	To do so, we have to prove in particular that the subgroup $H$ generated by that subset satisfies Condition (2) of the theorem, i.e., that the closure of $H$ contains the derived subgroup of $F$. To that end, with each subgroup of $F$, we associate an equivalence relation on the set of finite 
	binary words.
	
	Let us denote by $\mathcal B$  the set of all finite binary words and let $\mathcal B'$ be the set of all finite binary words which contain both digits ``0'' and ``1'' (i.e., of all finite binary words $u$ such that the interval $[u]$ is contained in $(0,1)$).
	For $u,v\in\mathcal B$ we say that $v$ is a \emph{descendant} of $u$ if $u$ is a strict prefix of $v$. We say that $u$ and $v$ are \emph{incomparable} if $u$ is not a prefix of $v$ and $v$ is not a prefix of $u$. Note that if $u$ and $v$ are incomparable then the interiors of $[u]$ and $[v]$ have empty intersection.  In that case, either $[u]\prec [v]$ or $[v]\prec [u]$.

	\begin{definition}
		Let $H$ be a subgroup of $F$.
		The \emph{equivalence relation induced by $H$} on the set of finite binary words $\mathcal B$, denoted $\sim_H$, is defined as follows. For every pair of finite binary words $u,v\in\mathcal B$ we have $u\sim_H v$ if and only if there is an element in $H$ with the pair of branches $u\to v$.
	\end{definition}
	
	Note that if $u\sim_Hv$ then for every finite binary word $w$, we have $uw\sim_H vw$. 
	Moreover, if $H$ is closed, we have the following. 
	
	\begin{lemma}[{\cite[Lemma 10]{G-32}}]
		\label{coherent}
		Let $H$ be a closed subgroup of $F$. Then for every pair of finite binary words $u,v\in\mathcal B$ we have $u\sim_H v$ if and only if $u0\sim_H v0$ and $u1\sim_H v1$. 
	\end{lemma}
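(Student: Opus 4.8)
The plan is to prove the two directions separately; the forward direction is immediate and the reverse direction is where the closedness hypothesis does the work. For the forward direction, suppose $u\sim_H v$, so there is an element $h\in H$ with the pair of branches $u\to v$. As noted in the excerpt, appending any finite binary word $w$ to both sides produces a pair of branches $uw\to vw$ of the same element $h$. Taking $w=0$ and $w=1$ gives $u0\sim_H v0$ and $u1\sim_H v1$ with the \emph{same} witnessing element $h\in H$, so this direction requires nothing beyond the observation already recorded before the lemma, and in particular does not use that $H$ is closed.

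For the reverse direction, assume $u0\sim_H v0$ and $u1\sim_H v1$. By definition there are elements $h_0,h_1\in H$ such that $h_0$ has the pair of branches $u0\to v0$ and $h_1$ has the pair of branches $u1\to v1$. Concretely, $h_0$ maps the dyadic interval $[u0]$ linearly onto $[v0]$ and $h_1$ maps $[u1]$ linearly onto $[v1]$. Now I would build a single piecewise-$H$ function $f$ as follows: on $[u0]$ let $f$ coincide with $h_0$, on $[u1]$ let $f$ coincide with $h_1$, and outside $[u]=[u0]\cup[u1]$ let $f$ coincide with some fixed element of $H$ chosen so that the pieces fit together into a genuine element of $F$ — here I invoke Remark \ref{r:choice} (equivalently Lemma \ref{choice}), using that $[u0]\prec[u1]$ are adjacent dyadic intervals whose union is the dyadic interval $[u]$, that their images $[v0]\prec[v1]$ are adjacent dyadic intervals with union $[v]$, and that the complementary pieces of $[0,1]$ can be mapped appropriately by elements of $H$ (for instance, one may take $f$ to be the identity outside a large enough neighborhood, or more carefully pick an element of $H$ that realizes the needed behavior on the boundary pieces; since $u\in\mathcal B'$ one only needs elements of $H$ realizing the requisite interval maps on the two complementary dyadic intervals $[u']$ to the left and right of $[u]$, and the identity of $F$ lies in $H$). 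The resulting $f$ is piecewise-$H$, hence $f\in\Cl(H)=H$ because $H$ is closed. On $[u]$ the function $f$ maps $[u0]$ linearly onto $[v0]$ and $[u1]$ linearly onto $[v1]$, hence it maps $[u]$ linearly onto $[v]$; thus $f$ has the pair of branches $u\to v$, which gives $u\sim_H v$.

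The main obstacle is the bookkeeping in the reverse direction: one must verify that the local prescriptions ``coincide with $h_0$ on $[u0]$'' and ``coincide with $h_1$ on $[u1]$'' can be extended to a bona fide element of $F$, i.e.\ that the interval-matching hypotheses (1)--(3) of Lemma \ref{choice} are met at the relevant boundary points, and that the extension can be taken piecewise-$H$ rather than merely piecewise-linear. This is handled by choosing the complementary pieces to be mapped by elements of $H$ — the simplest choice being the identity on $[0,.u]$ reflected suitably, but in general one applies Remark \ref{r:choice} with $g_i\in H$ on each piece — and noting that since $[u0],[u1],[v0],[v1]$ are all dyadic the linearity clause of Lemma \ref{choice} applies on those pieces. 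Once $f$ is known to lie in $\Cl(H)$, closedness of $H$ finishes the argument. I would also remark that this is precisely where the hypothesis ``$H$ closed'' is essential: without it there is no reason a piecewise-$H$ assembly of $h_0$ and $h_1$ lies in $H$.
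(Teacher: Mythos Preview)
The paper does not give its own proof of this lemma: it is quoted verbatim from \cite[Lemma 10]{G-32}. So there is no in-paper argument to compare against, and the question is simply whether your argument stands on its own.

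Your forward direction is fine. The reverse direction has the right overall shape --- build a piecewise-$H$ element and invoke closedness --- but the extension step is where you go wrong. You propose to make $f$ agree with $h_0$ on $[u0]$, with $h_1$ on $[u1]$, and with ``some fixed element of $H$'' (possibly the identity) outside $[u]$. The identity will not do: it sends $[0,.u]$ to itself, not to $[0,.v]$, so the pieces do not glue to a homeomorphism of $[0,1]$ unless $.u=.v$. Invoking Lemma~\ref{choice} or Remark~\ref{r:choice} does not rescue this, because those results produce an element of $F$, not an element of $H$, on the complementary pieces; you would then only know $f$ is piecewise-$F$, which is vacuous. You also silently assume $u\in\mathcal B'$, whereas the lemma is stated for all of $\mathcal B$.

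The fix is much simpler than what you attempted: use $h_0$ and $h_1$ themselves on the complementary pieces. Since $h_0$ is an orientation-preserving homeomorphism of $[0,1]$ carrying $[u0]$ onto $[v0]$, it automatically carries $[0,.u]$ onto $[0,.v]$; likewise $h_1$ carries $[.u1^{\mathbb N},1]$ onto $[.v1^{\mathbb N},1]$. So define $f$ to equal $h_0$ on $[0,.u1]$ and $h_1$ on $[.u1,1]$. The two pieces agree at $.u1$ (both send it to $.v1$), so $f\in F$; it is piecewise-$H$ with only two pieces, hence $f\in\Cl(H)=H$; and on $[u]$ it is linear onto $[v]$ because the slopes on $[u0]$ and $[u1]$ are both $|[v]|/|[u]|$. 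This handles all $u,v\in\mathcal B$, including $u=\emptyset$, $u=0^m$, and $u=1^n$, with no separate case analysis.
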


	\begin{remark}
		An equivalence relation $\sim$ on the set of finite binary words $\mathcal B$ such that for every $u,v\in\mathcal B$ we have $u\sim v$ if and only if $u0\sim v0$ and $u1\sim v1$ is called a \emph{coherent} equivalence relation in \cite{BBQS}. Coherent equivalence relations were used in \cite{BBQS} to study maximal subgroups of Thompson's group $V$. 
	\end{remark}
	
	The following corollary follows from Lemma \ref{coherent} by induction on $k$. 
	
	\begin{corollary}\label{cor:coherent}
		Let $H$ be a closed subgroup of $F$. Let $u,v\in\mathcal B$ and let $k\in\mathbb{N}$. Assume that for every finite binary word $w$ of length $k$ we have $uw\sim_H vw$. Then $u\sim_H v$. 
	\end{corollary}

	Recall that $\mathcal B'$ is the set of all finite binary words which contain both digits ``0'' and ``1''. 
	
	\begin{lemma}\label{lem:all inner words identified}
		Let $H$ be a closed subgroup of $F$. Assume that for every pair of finite binary words $u,v\in\mathcal B'$ we have $u\sim_H v$. Then $H$ contains the derived subgroup of $F$. 
	\end{lemma}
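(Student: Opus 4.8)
Recall from Section~\ref{s:FT} that $[F,F]$ consists precisely of those $f\in F$ possessing pairs of branches $0^m\to 0^m$ and $1^n\to 1^n$ for some $m,n$. So it suffices to show that every such $f$ lies in $\Cl(H)=H$ (the last equality since $H$ is closed). The plan is to localize: subdivide $[0,1]$ at the dyadic points $.0^m1^\mathbb{N}$ and $.1^n0^\mathbb{N}$ (i.e.\ along the branches $0^m1$ and $1^n0$ plus the trivial branches $0^m,1^n$) into finitely many dyadic pieces, and on each piece exhibit an element of $H$ agreeing with $f$ there. Since $H$ is closed and closure is exactly the set of piecewise-$H$ functions, patching these local agreements together will place $f$ in $\Cl(H)=H$.

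\textbf{Key steps.} First I would use the hypothesis that $u\sim_H v$ for all $u,v\in\mathcal B'$ to handle the ``inner'' part of $[0,1]$: the dyadic subdivision above partitions $(0,1)$ — away from the two fixed endpoints $0,1$ — into intervals of the form $[w]$ with $w\in\mathcal B'$. Given a branch $w\in\mathcal B'$, the element $f$ maps $[w]$ linearly onto some dyadic interval $[w']$; by subdividing $w$ further if necessary (replacing $w\to w'$ by $wz\to w'z$ for all length-$k$ suffixes $z$) I may assume $w'\in\mathcal B'$ as well, so $w\sim_H w'$, i.e.\ some $h_w\in H$ has the pair of branches $w\to w'$ and hence coincides with $f$ on $[w]$. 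Second, for the two end pieces $[0^m]$ and $[1^n]$: since $f\in[F,F]$ has slope $1$ at $0^+$ and at $1^-$, by choosing $m,n$ large enough $f$ fixes $[0^m]$ and $[1^n]$ pointwise, so the identity element of $H$ agrees with $f$ there. (Alternatively, stay with whatever $m,n$ come from the tree-diagram: $f$ maps $[0^m]$ linearly onto $[0^{m}]$ — slope $1$ — so again the identity works.) Third, having produced, for each interval in a finite dyadic subdivision of $[0,1]$, an element of $H$ coinciding with $f$ on that interval, conclude $f\in\Cl(H)=H$. Finally, since $f\in[F,F]$ was arbitrary, $[F,F]\leq H$.

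\textbf{Main obstacle.} The only subtlety is bookkeeping at the boundary between the ``end'' pieces and the ``inner'' pieces, and more generally making sure the local pieces glue into an element of $F$ rather than merely a piecewise-linear map: but this is precisely what being piecewise-$H$ encodes, and the definition of $\Cl(H)$ in Section~\ref{s:FT} was set up (via dyadic subdivisions, using that breakpoints of $F$ are dyadic) so that no compatibility conditions at the breakpoints are needed — any function in $F$ that is piecewise-$H$ on a dyadic subdivision automatically lies in $\Cl(H)$. So the real content is just the reduction of each inner piece $w\to w'$ to a relation $w\sim_H w'$ with both words in $\mathcal B'$, which is handled by passing to a common refinement of branches (and is where Corollary~\ref{cor:coherent} or the observation ``$u\sim_H v\Rightarrow uw\sim_H vw$'' is used). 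I expect the write-up to be short.
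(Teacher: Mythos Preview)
Your proposal is correct and essentially follows the paper's proof. One minor simplification: the extra subdivision step to ensure $w'\in\mathcal B'$ is unnecessary if you use the branches of the reduced tree-diagram of $f$ directly --- since the leftmost and rightmost branches are $0^m\to 0^m$ and $1^n\to 1^n$, every remaining (inner) branch pair $u_i\to v_i$ automatically has both $u_i,v_i\in\mathcal B'$, so the hypothesis applies without further refinement.
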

	
	\begin{proof}
		Let $f\in [F,F]$ . Then the reduced tree-diagram of $f$ consists of pairs of branches 
		\[
		f :
		\begin{cases}
			0^m & \rightarrow 0^m\\
			u_i  & \rightarrow v_i \mbox{ for } i=1,\dots,k \\
			1^n & \rightarrow 1^n\\
		\end{cases}
		\]
		where $k,m,n\in\mathbb{N}$ and where for each $i=1,\dots,k$, the binary words $u_i$ and $v_i$ both 
		belong to $\mathcal B'$. By assumption, for each $i=1,\dots,k$ we have $u_i\sim_H v_i$ and as such there is an element $h_i\in H$ with the pair of branches $u_i\rightarrow v_i$. Then $h_i$ coincides with $f$ on the interval $[u_i]$. We note also that $f$ coincides with the identity function {\bfseries{1}} $\in H$ on $[0^m]$ and on $[1^n]$. Since $[0^m],[u_1],\dots,[u_k],[1^n]$ is a subdivision of the interval $[0,1]$ and on each of these intervals $f$ coincides with a function in $H$, $f$ is a piecewise-$H$ function. Since $H$ is closed, $f\in H$. 
	\end{proof}

Due to Lemma \ref{lem:all inner words identified}, we will be interested in closed subgroups $H$ of $F$ such that all finite binary words in $\mathcal B'$ are $\sim_H$-equivalent. The following lemmas will be useful.

\begin{lemma}\label{lem:clear}
	Let $H$ be a closed subgroup of $F$ and let $u$ be a finite binary word such that $u\sim_H u0\sim_H u1$. Let $v$ be a finite binary word and assume that there exists $k\in\mathbb{N}$ such that for every finite binary word $s$ of length $k$, we have $vs\sim_H u$. Then $v\sim_H u$. 
\end{lemma}

\begin{proof}
	First, note that since $u\sim_H u0\sim_H u1$, it follows by induction that for every finite binary word $s$, we have $us\sim_H u$. In other words, every descendant  of $u$ is $\sim_H$-equivalent to $u$. Now, by assumption, for every finite binary word $s$ of length $k$, we have $vs\sim_H u$. Hence, for every finite binary word $s$ of length $k$ we have $vs\sim_H us$. Therefore, by Corollary \ref{cor:coherent}, we have $v\sim_H u$, as required. 
\end{proof}

\begin{lemma}\label{lem:ezer}
	Let $H$ be a closed subgroup of $F$ and let $u$ be a finite binary word such that $u\sim_H u0\sim_H u1$. Let $h\in H$ and let $[a,b]\subseteq (0,1)$ be such that $h([u])=[a,b]$. Then for every finite binary word $v$ such that $[v]$ is contained in $[a,b]$, we have $v\sim_H u$. 
\end{lemma}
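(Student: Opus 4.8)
The plan is to reduce the statement to Lemma~\ref{lem:clear}: it suffices to produce a single $k\in\mathbb{N}$ such that $vs\sim_H u$ for \emph{every} binary word $s$ of length $k$. The identifications $vs\sim_H u$ will be obtained by combining honest pairs of branches of the element $h\in H$ with the hypothesis $u\sim_H u0\sim_H u1$.

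First I would record that $u\sim_H u0\sim_H u1$ implies $us\sim_H u$ for every binary word $s$ (this is shown at the start of the proof of Lemma~\ref{lem:clear}, by induction on $|s|$ using that $x\sim_H y$ entails $xw\sim_H yw$). Then, after attaching carets if necessary, I would pass to a (not necessarily reduced) tree-diagram $(T_+,T_-)$ of $h$ in which $u$ is a node of $T_+$: if $u$ is a strict prefix of two or more leaves of a reduced tree-diagram of $h$ it is already an internal node; otherwise some leaf is a prefix of $u$, and one subdivides that leaf down to $u$, adjoining matching carets to $T_-$. Let $uw_1,\dots,uw_r$ be the leaves of $T_+$ weakly below $u$ and $z_1,\dots,z_r$ the corresponding leaves of $T_-$, so that $h$ carries each dyadic interval $[uw_j]$ linearly onto $[z_j]$. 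Since the $[uw_j]$ partition $[u]$ and $h$ is an increasing homeomorphism, the $[z_j]$ partition $h([u])=[a,b]$; in particular $[v]\subseteq[a,b]=\bigcup_{j=1}^{r}[z_j]$.

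Next I would take $k=\max_j|z_j|$ and fix an arbitrary word $s$ of length $k$. The interval $[vs]$ is a nondegenerate subinterval of $[a,b]$, so it meets the interior of some block $[z_j]$; then $[vs]$ and $[z_j]$ overlap in positive length, so one of the words $vs$, $z_j$ is a prefix of the other, and since $|vs|=|v|+k>|z_j|$ (using $|v|\ge 1$, which holds because $[v]\subseteq[a,b]\subseteq(0,1)$) it must be $z_j$ that is a prefix of $vs$, say $vs=z_j r$ with $r$ a nonempty word. Because $uw_j\to z_j$ is a pair of branches of $h$, so is $uw_j r\to z_j r=vs$, whence $uw_j r\sim_H vs$; and since $w_j r$ is nonempty, $uw_j r$ is a proper descendant of $u$, so $uw_j r\sim_H u$ by the first step. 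Thus $vs\sim_H u$ for every word $s$ of length $k$, and Lemma~\ref{lem:clear} then yields $v\sim_H u$.

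I expect the only genuinely delicate point to be the choice of $k$: it must be large enough that every small interval $[vs]$ with $|s|=k$ is contained in a single block $[z_j]$ of the partition of $h([u])$, which is exactly what makes $uw_j r\to vs$ an honest pair of branches of $h$, and $k=\max_j|z_j|$ is precisely what is needed for this. The remaining ingredients --- realising $u$ as a node of the domain tree of some tree-diagram of $h$, and checking that the words $uw_j r$ produced are \emph{proper} descendants of $u$ so that the consequence of $u\sim_H u0\sim_H u1$ applies --- are routine bookkeeping.
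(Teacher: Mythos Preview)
Your proof is correct and follows essentially the same approach as the paper: both reduce to Lemma~\ref{lem:clear} by showing that for some $k$ every $vs$ with $|s|=k$ is $\sim_H$-equivalent to a descendant of $u$, using that $h$ has a pair of branches linking them. The paper's version is marginally slicker in that it works with $h^{-1}$ directly (choosing $k$ so that $h^{-1}$ is linear on each $[vs]$, which immediately gives a pair of branches $vs\to uw_s$), thereby bypassing the tree-diagram bookkeeping you carry out; but the underlying idea is identical.
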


\begin{proof}
	Let $v$ be a finite binary word such that $[v]\subseteq [a,b]$. 
	Note that $h^{-1}([v])$ is contained in $[u]$. 
	Since $h^{-1}\in F$, there exists some $k\in\mathbb{N}$ such that for every finite binary word $s$ of length $k$, the function $h^{-1}$ acts linearly on the dyadic interval $[vs]$ and as such, maps it onto some dyadic interval contained in $[u]$. Hence, for each  binary word $s$ of length $k$, there exists some finite binary words $w_s$ such that $h^{-1}$ has the pair of branches $vs\to uw_s$. From the assumption on $u$, it follows that every descendant of $u$ is $\sim_H$-equivalent to $u$. Hence, for every finite binary word $s$ of length $k$, we have $vs\sim_H uw_s\sim_H u$. Hence, by Lemma \ref{lem:clear}, we have $v\sim_H u$, as required.
\end{proof}

Let $h\in F$ and assume that $(a,b)$ is an orbital of $h$. 
An interval $I\subseteq (a,b)$ is called a \emph{fundamental domain} of $h$ if it contains exactly one point of each orbit of the action of $\la h \ra$ on $(a,b)$. Note that if $x\in (a,b)$ and $(a,b)$ is a push-up orbital of $h$ then the interval $I=[x,h(x))$ is a {fundamental domain} of $h$. Similarly, if $(a,b)$ is a push-down orbital of $h$ and $x\in (a,b)$ then $[h(x),x)$ is a fundamental domain of $h$.
Note also that if $I\subseteq (a,b)$ is a fundamental domain of $h$ then
$(a,b)=\bigcup_{n\in\mathbb{Z}} h^n(I)$ and that every closed sub-interval of $(a,b)$ is contained in some finite sub-union.

\begin{lemma}\label{lem:fun_dom}
	Let $H$ be a closed subgroup of $F$ and let $h\in H$. Assume that $(a,b)$ is an orbital of $h$ and that  the interval $[u)=[.u,.u1^\mathbb{N})$ 
	is contained in $(a,b)$ and is a fundamental domain of $h$. Assume also that $u\sim_H u0\sim_H u1$. 
	Then for every finite binary word $v$ such that $[v]$ is contained in $(a,b)$, we have $v\sim_H u$. 
\end{lemma}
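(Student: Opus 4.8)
The plan is to reduce the statement about arbitrary binary words $v$ with $[v]\subseteq(a,b)$ to the case where $[v]$ lies inside a single fundamental domain $[u)$ and then invoke Lemma~\ref{lem:ezer}. First I would observe that since $[u)$ is a fundamental domain of $h$, we have $(a,b)=\bigcup_{n\in\mathbb{Z}}h^n([u))$, and every closed dyadic subinterval of $(a,b)$ is contained in a finite sub-union $\bigcup_{n=-N}^{N}h^n([u))$. Given a finite binary word $v$ with $[v]\subseteq(a,b)$, the closed dyadic interval $[v]$ is covered by finitely many of the intervals $h^n([u))$; by refining $v$ (i.e., replacing $v$ by all its descendants $vs$ of some sufficiently large length $k$), I can arrange that each $[vs]$ is contained in a single $h^n([u))$ for some $n=n(s)\in\mathbb{Z}$ — here I use that $h^n$ acts linearly on sufficiently small dyadic intervals and maps dyadic intervals to dyadic intervals, so for $k$ large enough each $[vs]$ lies strictly inside one of the images of $[u)$ under a power of $h$.

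Next, for each such $s$, since $[vs]\subseteq h^{n(s)}([u))\subseteq(0,1)$ and $h^{n(s)}\in H$ with $h^{n(s)}([u])=[a_s,b_s]$ for some $[a_s,b_s]\subseteq(0,1)$ (note $h^{n(s)}([u))$ and $h^{n(s)}([u])$ have the same dyadic endpoints), Lemma~\ref{lem:ezer} applied with $h^{n(s)}$ in place of $h$ gives $vs\sim_H u$. Strictly speaking one should be slightly careful: $[vs]$ is contained in the half-open interval $h^{n(s)}([u))$, so in particular it is contained in the closed interval $h^{n(s)}([u])$, and that is exactly the hypothesis Lemma~\ref{lem:ezer} needs. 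Thus for every finite binary word $s$ of the chosen length $k$ we have $vs\sim_H u$.

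Finally, since $u\sim_H u0\sim_H u1$, Lemma~\ref{lem:clear} applies: we have a finite binary word $v$ and an integer $k$ such that $vs\sim_H u$ for every binary word $s$ of length $k$, hence $v\sim_H u$, which is the desired conclusion.

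The main obstacle, such as it is, is purely bookkeeping: making precise the claim that for $k$ large enough each descendant interval $[vs]$ is swallowed by a single $h^n([u))$. This needs the observations that (a) $[v]$ is a compact subset of the open interval $(a,b)$ hence is covered by finitely many $h^n([u))$, (b) the family $\{h^n([u))\}_{n\in\mathbb{Z}}$ is a partition of $(a,b)$ into half-open intervals, so any point of $[v]$ lies in exactly one piece, and (c) $h$ being piecewise-linear with dyadic breakpoints, for sufficiently small $k$ every dyadic interval $[vs]$ of depth $k$ below $[v]$ is mapped linearly by each relevant power of $h$, and in particular each $[vs]$ is contained in whichever single piece $h^n([u))$ contains its left endpoint, unless it straddles a boundary point $h^n(.u)$ — but there are only finitely many such boundary points in $[v]$, and by taking $k$ large we can ensure each $[vs]$ is contained in the closure of a single piece, which is enough. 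Everything else is a direct chain of the already-established Lemmas~\ref{lem:ezer} and~\ref{lem:clear}.
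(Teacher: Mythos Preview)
Your proposal is correct and follows essentially the same approach as the paper: cover $[v]$ by finitely many translates $h^n([u])$, refine $v$ into descendants $vs$ of some fixed length $k$ so that each $[vs]$ lies in a single translate, apply Lemma~\ref{lem:ezer} to get $vs\sim_H u$, and conclude via Lemma~\ref{lem:clear}. The paper handles your ``bookkeeping obstacle'' in one line by observing that the endpoints of each $h^n([u])$ are dyadic (since $.u$ is dyadic and $h\in F$), so a fine enough dyadic subdivision of $[v]$ automatically respects these boundary points; this replaces your more elaborate discussion of straddling and closures.
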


\begin{proof}
	Since $[u)\subseteq (a,b)$ is a fundamental domain of $h$, we have  
	$\bigcup_{n\in\mathbb{Z}} h^n([u])=(a,b).$ 
	Let $v$ be a finite binary word such that $[v]\subseteq (a,b)$. 
	 If there exists $n\in\mathbb{Z}$ such that $[v]$ is contained in $h^n([u])$, then by Lemma \ref{lem:ezer}, we have $v\sim_H u$, as required. Hence, we can assume that $[v]$ is not contained in any of the intervals $h^n([u])$, $n\in\mathbb{Z}$. In that case, since $[v]$ is a closed  sub-interval of the orbital $(a,b)$, it is contained in some finite sub-union of $\bigcup_{n\in\mathbb{Z}} h^n([u])$. Since $[v]$ is a dyadic interval and each of the intervals $h^n([u])$ has dyadic endpoints,  there exists some dyadic subdivision of $[v]$, such that  each interval in the subdivision is contained inside $h^n([u])$ for some $n\in\mathbb{Z}$. Hence, there exists some $k\in\mathbb{N}$ such that for each finite binary word $s$ of length $k$, the interval $[vs]$ is contained inside $h^{n_s}([u])$ for some $n_s\in\mathbb{Z}$. Then Lemma \ref{lem:ezer} implies  that for each binary word $s$ of length $k$ we have $vs\sim_H u$. Hence, by Lemma \ref{lem:clear}, we have $v\sim_H u$, as required. 
\end{proof}

\section{Preliminary lemmas}

For the proof of the main result we will need the following preliminary lemmas.

\begin{lemma}\label{lem:condition 2}
	Let $c,d\in \mathbb{Z}$ be co-prime such that $|cd|\neq 1$. 
	Let $S$ be a finite subset of $F$ such that $(c,d)$ is a common co-generator of the elements of $S$ in $F$. Then there exists $\alpha\in (0,1)$ that is not fixed by any of the elements of $S$. 
\end{lemma}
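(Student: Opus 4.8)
The plan is to translate the generation hypothesis into a determinant condition on abelianization images, and then to locate the desired point $\alpha$ very close to one of the endpoints $0$ or $1$.

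First I would unwind the hypothesis. For $f\in S$ write $\pi(f)=(n_f,m_f)$. Saying that $(c,d)$ is a common co-generator of the elements of $\pi(S)$ in $\Ztwo$ means that for every $f\in S$ the pair $\{(c,d),(n_f,m_f)\}$ generates $\Ztwo$, which is equivalent to the determinant identity $cm_f-dn_f=\pm 1$.

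The heart of the argument is the following dichotomy: \emph{either $n_f\neq 0$ for every $f\in S$, or $m_f\neq 0$ for every $f\in S$}. Suppose this fails; then there are $f,g\in S$ with $n_f=0$ and $m_g=0$. The identity $cm_f-dn_f=\pm 1$ then reads $cm_f=\pm 1$, forcing $|c|=1$, and the identity $cm_g-dn_g=\pm 1$ reads $-dn_g=\pm 1$, forcing $|d|=1$; hence $|cd|=1$, contradicting the hypothesis. This is the only place where the assumption $|cd|\neq 1$ enters, and I expect this step to be the main (if mild) obstacle — everything else is routine.

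To conclude, assume without loss of generality that $n_f\neq 0$ for all $f\in S$; if instead all $m_f\neq 0$, the symmetric argument applies at the endpoint $1$ (using $f'(1^-)=2^{m_f}$ and a left-neighborhood of $1$) in place of $0$. Each $f\in S$ then has slope $f'(0^+)=2^{n_f}\neq 1$, so on the interval $(0,\epsilon_f)$, where $\epsilon_f$ is the first breakpoint of $f$, we have $f(t)=2^{n_f}t\neq t$. Since $S$ is finite, $\epsilon:=\min_{f\in S}\epsilon_f>0$, and any $\alpha\in(0,\epsilon)$ is fixed by no element of $S$, as desired. This last step is exactly the neighborhood-of-$0$ argument already used in the proof of Remark~\ref{rem:3 elements}.
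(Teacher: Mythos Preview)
Your proof is correct and follows essentially the same approach as the paper: the paper argues that $|cd|\neq 1$ forces $|c|\neq 1$ or $|d|\neq 1$, and then (say in the first case) that $|ad-bc|=1$ with $|c|\neq 1$ forces $a\neq 0$ for every $f\in S$, whence the endpoint argument near $0$ applies exactly as you describe. Your contrapositive phrasing of the dichotomy is a cosmetic variation of the same idea.
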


\begin{proof}
	Since $|cd|\neq 1$, we must have $|c|\neq 1$ or $|d|\neq 1$. 
	Assume without loss of generality that $|c| \neq 1$. Let $f$ be a function in $S$ and assume that $\pi(f)=(a,b)$. Since $(c,d)$ is a co-generator of $\pi(f)$ in $\Ztwo$, we have $|ad-bc|=1$. 	
	We claim that $a\neq 0$. 
	 Indeed, if $a=0$, it follows that $|bc|=1$ and therefore $|c|=1$, in contradiction to the assumption. Hence, $a\neq 0$. 
 It follows that $f'(0^+)=2^a\neq 1$.
 Since the slope of $f$ at $0^+$ is non-trivial, there exists some $\beta\in (0,1)$ such that $f$ acts linearly with non-trivial slope in the interval $(0,\beta]$ and hence does not fix any number in that interval. 
	Since $S$ is a finite set, there exists a small enough interval $(0,\gamma)$ such that  every function in $F$ does not fix any number in $(0,\gamma)$. 
\end{proof}

\begin{lemma}\label{lem:obvious}
	Let $S=\{f_1,\dots,f_k\}$ be a subset of $F$ and assume that there is a point $\alpha \in (0,1)$ that is not fixed by any of the elements in $S$.
		Then there exist incomparable finite binary words $u_1,v_1,\dots, u_k,v_k,w\in \mathcal B'$ such that 
		$$[u_1],[v_1],\dots,[u_{k}],[v_{k}]\prec[w]$$
		and such that for each $i\in\{1,\dots,k\}$ we have $u_i\sim_{\la f_i \ra} v_i \sim_{\la f_i \ra} w$.
	\end{lemma}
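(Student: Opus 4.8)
The plan is to handle the $f_i$ one orbital at a time, using crucially that all the orbitals in play contain the common point $\alpha$. Since $f_i$ does not fix $\alpha$, the point $\alpha$ lies in some orbital $(p_i,q_i)$ of $f_i$, and after replacing $f_i$ by $f_i^{-1}$ where necessary --- which changes neither $\la f_i\ra$ nor $\sim_{\la f_i\ra}$ --- I may assume each $(p_i,q_i)$ is a push-up orbital, so $f_i(x)>x$ on $(p_i,q_i)$. Then $(p,q):=\bigcap_{i=1}^{k}(p_i,q_i)$ is an open subinterval of $(0,1)$ containing $\alpha$, in particular non-empty. Let $\mathcal O$ be the union over $i$ of the $f_i$-orbits (in both directions) of the finitely many breakpoints of $f_i$; each such orbit is countable with accumulation points only among the fixed points of the $f_i$, so $\mathcal O$ is countable and nowhere dense. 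I would fix a dyadic number $.w\in(p,q)$ lying outside $\overline{\mathcal O}$ and let $w$ be the corresponding standard dyadic interval of length $2^{-|w|}$ with $|w|$ large; for $|w|$ large this gives $[w]\subseteq(p,q)$ and $[w]\cap\mathcal O=\emptyset$. Since the breakpoints of any power of $f_i$ that lie in $(p_i,q_i)$ belong to $\mathcal O$, every power of $f_i$ is linear on $[w]$; and since $\mathcal O$ is $f_i$-invariant, every image $f_i^{-n}([w])$ again misses $\mathcal O$, lies in $(p_i,q_i)$, and these images shrink towards $p_i$ as $n\to\infty$ because $(p_i,q_i)$ is push-up.

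Granting for the moment that $f_i^{-n}([w])$ is a \emph{standard} dyadic interval for all large $n$, I would build the words by processing $i=1,\dots,k$ in turn. At stage $i$, having already fixed $[w]$ and the intervals from the earlier stages, I choose large integers $n_i$ and $m_i$ and put $[v_i]:=f_i^{-n_i}([w])$ and $[u_i]:=f_i^{-n_i-m_i}([w])$. Then $f_i^{n_i}$ has $v_i\to w$ and $f_i^{m_i}$ has $u_i\to v_i$ as pairs of branches --- the relevant restrictions being linear because $\mathcal O$ is $f_i$-invariant and misses $[w]$ --- whence $u_i\sim_{\la f_i\ra}v_i\sim_{\la f_i\ra}w$. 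Taking $n_i,m_i$ large enough I arrange that $[u_i]\prec[v_i]\prec[w]$ with pairwise disjoint interiors and that $[u_i],[v_i]$ lie in a neighbourhood of $p_i$ small enough to be disjoint from $[w]$ and from every interval chosen at an earlier stage; this is possible because $p_i$ lies in none of those finitely many closed intervals --- indeed $[w]\subseteq(p_i,q_i)$, and at each earlier stage $j$ one likewise chose $[u_j],[v_j]$ inside a neighbourhood of $p_j$ that omits $p_i$ (which one can do, since either $p_i=p_j$, not a point of the open interval $(p_j,q_j)\supseteq[u_j]\cup[v_j]$, or $p_i\neq p_j$). In the end $u_1,v_1,\dots,u_k,v_k,w$ all have dyadic intervals inside $(0,1)$, hence lie in $\mathcal B'$; their intervals have pairwise disjoint interiors, so the words are pairwise incomparable; and $[u_1],[v_1],\dots,[u_k],[v_k]\prec[w]$, as required.

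The one genuinely technical point --- and the main obstacle --- is the standing claim that $f_i^{-n}([w])$ is a \emph{standard} dyadic interval, not merely an interval with dyadic endpoints, for all large $n$. I would settle it by a computation with binary expansions. Near $p_i$ the map $f_i$ is linear with slope $2^{e_i}$, $e_i\ge 1$, so $f_i^{-1}(x)=2^{-e_i}x+(1-2^{-e_i})p_i$ there, and since $f_i$ sends the dyadic points of this neighbourhood to dyadic points, the number $(2^{e_i}-1)p_i$ is dyadic, say with $c_i$ binary digits. Writing the linear map $f_i^{-n}$ on $[w]$ as $x\mapsto 2^{s_{i,n}}x+c_{i,n}$ with $s_{i,n}\in\mathbb Z$ and $c_{i,n}$ dyadic, one checks that $f_i^{-n}([w])$ is standard dyadic exactly when the number of binary digits of $f_i^{-n}(.w)$ is at most $|w|-s_{i,n}$. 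Once the iterates $f_i^{-n}([w])$ have entered the linear neighbourhood of $p_i$ (which happens for $n$ beyond a threshold depending only on $f_i$ and $.w$), each further application of $f_i^{-1}$ decreases $s_{i,n}$ by exactly $e_i$ while increasing the number of binary digits of $f_i^{-n}(.w)$ by at most $e_i$ --- here the bound $c_i$ enters --- so the two quantities grow at the same linear rate in $n$, and choosing $|w|$ large enough at the outset (which disturbs neither $.w$ nor the constants above) forces $|w|-s_{i,n}$ to dominate for all large $n$ and all $i$. This bookkeeping is routine, but it is the step that genuinely uses the dyadic hypotheses on $F$; everything else in the argument is soft.
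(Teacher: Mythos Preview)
Your argument is essentially correct, but it takes a noticeably more laborious route than the paper's. The key difference is the \emph{order} in which you make your choices.

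You fix the dyadic interval $[w]$ first (carefully, avoiding the closure of all breakpoint orbits), and only afterwards choose the exponents $n_i,m_i$. Because of this order you are forced to show that $f_i^{-n}([w])$ is a \emph{standard} dyadic interval for all large $n$, which is exactly the digit-counting computation you carry out in the last paragraph. That computation is right (the constants $d_0,c_i,s_{i,n_0}$ indeed depend only on $.w$ and $f_i$, so enlarging $|w|$ by appending zeros secures the needed inequality uniformly), but it is genuine extra work. Your disjointness argument also works, though note that it is not purely forward-inductive: at stage $j$ you must already arrange that $[u_j],[v_j]$ avoid the finitely many points $p_i$ with $i>j$, which you do implicitly.

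The paper reverses the order. It first fixes a closed interval $I$ in the common intersection $(a,b)$, reorders the $f_i$ so that $a_k\le\cdots\le a_1$, and then inductively chooses integers $n_i,m_i$ so that the images $I_i=f_i^{n_i}(I)$ and $J_i=f_i^{m_i}(I)$ satisfy $J_k\prec I_k\prec\cdots\prec J_1\prec I_1\prec I$; the ordering of the $a_i$ guarantees that each $J_{i-1}$ already lies in the orbital $(a_i,b_i)$, so the induction continues. Only \emph{after} these finitely many powers are fixed does the paper shrink $I$ to a standard dyadic $[w]$ on which every map in the finite set $\{f_i^{n_i},f_i^{m_i}\}$ is linear. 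Linearity on $[w]$ then automatically makes each image a standard dyadic interval, so the branch pairs $w\to u_i$, $w\to v_i$ come for free and no digit bookkeeping is needed.

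In short: both approaches are valid, but the paper's ``exponents first, then $[w]$'' strategy sidesteps entirely the technical obstacle you identify as the main one.
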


	\begin{proof}
		Since each of the elements $f_1,\dots,f_k$ does not fix $\alpha$, for each $i\in\{1,\dots,k\}$ the element $f_i$ has some orbital $(a_i,b_i)$ which contains $\alpha$. 
			By changing the order of the elements $f_1,\dots,f_k$ if necessary, we can assume  that  $a_k\leq a_{k-1}\leq\dots\leq a_1$.		
		Let $(a,b)=\bigcap_{i=1}^k (a_i,b_i)$ and note that $(a,b)$ is a non-empty open interval and that $a_i\leq a$ and  $b\leq b_i$ for each $i\in\{1,\dots,k\}$.
		Let $I$ be a closed sub-interval of $(a,b)$.  
		We define integers $n_1,m_1,\dots,n_k,m_k\in\BZ$  and closed intervals $I_1,J_1,\dots,I_k,J_k$
	 inductively as follows. 
		For $i=1$, since $I$ is a closed sub-interval of the orbital $(a_1,b_1)$ of $f_1$, there exists $n_1\in\mathbb{Z}$ such that $f_1^{n_1}(I)\prec I$. We let $I_1=f_1^{n_1}(I)$ and note that $I_1$ and $I$ are both contained in the orbital $(a_1,b_1)$ of $f_1$. Hence, there exists $m_1\in \mathbb{Z}$ such that $f_1^{m_1}(I)\prec I_1$. We let $J_1=f_1^{m_1}(I)$. Note that $J_1\prec I_1\prec I$ and that $J_1$ is contained in $(a_1,b)\subseteq (a_2,b_2)$. 
		For $i=2$, since $J_1$ and $I$ are contained in the orbital $(a_2,b_2)$ of $f_2$, there exists $n_2\in\mathbb{Z}$ such that $f_2^{n_2}(I)\prec J_1$. We let $I_2=f_2^{n_2}(I)$ and note that $I_2$ is also contained in the orbital $(a_2,b_2)$ of $f_2$. Hence, there exists $m_2\in \mathbb{Z}$ such that $f_2^{m_2}(I)\prec I_2$. We let $J_2=f_2^{m_2}(I)$. Note that $J_2\prec I_2\prec J_1\prec I_1\prec I$ and that $J_2$ is contained in $(a_2,b)\subseteq (a_3,b_3)$.
		We continue in this manner  
		 until we get integers $n_1,m_1,\dots,n_k,m_k\in\BZ$  and closed intervals $I_1,J_1,\dots,I_k,J_k$ such that the following conditions hold. 
		\begin{enumerate}
			\item[(1)] For each $i\in\{1,\dots,k\}$ we have $f_i^{n_i}(I)=I_i$ and $f_i^{m_i}(I)=J_i$.
			\item[(2)] $J_k\prec I_k \prec J_{k-1}\prec I_{k-1}\prec\cdots\prec J_1\prec I_1\prec I$.
		\end{enumerate} 
		Now, consider the set of functions $L=\{f_1^{n_1},f_1^{m_1}, \dots f_k^{n_k},f_k^{m_k}\}$. Since every function in $F$ is piecewise linear with finitely many pieces, 
		there  exists a (long enough) finite binary word $w$ such that the dyadic interval $[w]$ is contained in $I$ and such that all the functions in $L$ are linear on $[w]$ (and as such, map it onto dyadic intervals). Hence, for each $i\in\{1,\dots,k\}$, there exist finite binary words $u_i$ and $v_i$ such that
		$f_i^{m_i}$ has the pair of branches $w\to u_i$ and $f_i^{n_i}$ has the pair of branches $w\to v_i$. 
		Note that $[u_i]\subseteq f_i^{m_i}(I)= J_i$ and $[v_i]\subseteq f_i^{n_i}(I)= I_i$. Hence,  
		$$[u_k]\prec[v_k]\prec[u_{k-1}]\prec[v_{k-1}]\prec\cdots\prec[u_1]\prec[v_1]\prec[w]$$
		and in particular, the words $u_1,v_1,\dots,u_k,v_k,w$ are incomparable.
		In addition, since $f_i^{m_i}$ has the pair of branches $w\to u_i$ and $f_i^{n_i}$ has the pair of branches $w\to v_i$ we have 
		$w\sim_{\la f_i\ra} u_i$ and $w\sim_{\la f_i\ra} v_i$. Hence, 
		  $u_i\sim_{\la f_i \ra} v_i \sim_{\la f_i \ra} w$, as required.

	\end{proof}
	
	The proof of the following lemma is similar  to the proof of Lemma \ref{lem:obvious} and is left as an exercise to the reader. 
	
	\begin{lemma}\label{lem:incomparable}
		Let $S=\{f_1,\dots,f_k\}$ be a subset of non-trivial elements of $F$. 
	Then there exist incomparable finite binary words $u_1,v_1,\dots, u_k,v_k\in \mathcal B'$ such that
		for each $i\in\{1,\dots,k\}$ we have that   $u_i\sim_{\la f_i \ra} v_i$.
	\end{lemma}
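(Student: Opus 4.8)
The plan is to mimic the construction from the proof of Lemma~\ref{lem:obvious}, but in a simplified setting where we no longer need a common ``target'' word $w$ dominating all the pairs, nor a common fixed point $\alpha$. First I would recall that each $f_i$ is non-trivial, so it has at least one orbital $(a_i,b_i)\subseteq(0,1)$; unlike in Lemma~\ref{lem:obvious}, these orbitals need not overlap. Fix, once and for all, a dyadic point $x_i$ inside each orbital $(a_i,b_i)$. Since $(a_i,b_i)$ is an orbital of $f_i$ and $x_i$ lies in it, by Remark~\ref{rem:orb_n} there is $n_i\in\mathbb{Z}$ with (say) $f_i^{n_i}(x_i)>x_i$ if $(a_i,b_i)$ is push-up, or $f_i^{n_i}(x_i)<x_i$ if push-down; in either case $I_i:=[\min(x_i,f_i^{n_i}(x_i)),\max(x_i,f_i^{n_i}(x_i))]$ is a non-degenerate closed dyadic subinterval of $(a_i,b_i)$ on whose left endpoint $f_i^{n_i}$ acts non-trivially.

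Next I would invoke that every element of $F$ is piecewise linear with finitely many pieces: so there is a long enough finite binary word $t_i$ with $[t_i]\subseteq I_i$ on which $f_i^{n_i}$ is linear, hence $f_i^{n_i}$ maps $[t_i]$ onto a dyadic interval and has a pair of branches $t_i\to s_i$ for some finite binary word $s_i$. Set $u_i:=t_i$ and $v_i:=s_i$. Since $[u_i]=[t_i]\subseteq I_i\subseteq(a_i,b_i)\subseteq(0,1)$ we get $u_i\in\mathcal B'$, and since $[v_i]=f_i^{n_i}([t_i])\subseteq I_i\subseteq(0,1)$ we get $v_i\in\mathcal B'$; and because $f_i^{n_i}$ has the pair of branches $u_i\to v_i$, we have $u_i\sim_{\la f_i\ra} v_i$. (If one wants $u_i\neq v_i$ as well, shrink $[t_i]$ slightly so that it is not fixed by $f_i^{n_i}$, which is possible as $f_i^{n_i}$ is non-trivial near the chosen endpoint.)

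The one remaining thing is the incomparability of the full list $u_1,v_1,\dots,u_k,v_k$. To arrange this I would not fix the words $t_i$ independently, but build them with a prescribed first block: choose $2k$ pairwise-incomparable finite binary words $p_1,q_1,\dots,p_k,q_k\in\mathcal B'$ (for instance $p_i=0^i10$, $q_i=0^i11$), all contained in $(0,1)$. Then, for each $i$, pass to a conjugate picture: pick the orbital interval $I_i$ as above and choose the word $t_i$ inside $[p_i]\cap I_i$ after first noting $[p_i]\cap I_i$ has non-empty interior — which it may not, so instead I would do the standard trick of replacing $f_i$ by a conjugate is not allowed here since the lemma is about $f_i$ themselves. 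The clean fix: once $[u_i]$ and $[v_i]$ have been produced as above (small dyadic intervals inside $I_i\subseteq(0,1)$), both contained in a common dyadic interval $[r_i]\subsetneq(0,1)$ bounded away from $0$ and $1$, I can further pull them back by a large power of $f_i$ along its orbital so that $[r_i]$ becomes as small and as far ``to the left'' (or right) as desired; iterating, I can place the $k$ little clusters $\{[u_i],[v_i]\}$ into $k$ pairwise disjoint dyadic intervals $[p_1],\dots,[p_k]$ chosen in advance, which guarantees that all $2k$ words are incomparable. I expect this last bookkeeping step — arranging genuine incomparability across different $i$ by sliding each pair into its own pre-assigned dyadic box using powers of $f_i$ inside its orbital — to be the only real point requiring care; everything else is a direct specialization of Lemma~\ref{lem:obvious}.
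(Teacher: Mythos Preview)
Your overall plan—pick an orbital $(a_i,b_i)$ for each $f_i$, produce a pair of branches $u_i\to v_i$ inside it, then arrange pairwise incomparability—is sound and is indeed the intended adaptation of Lemma~\ref{lem:obvious}. The first two paragraphs are essentially fine, modulo minor slips: $[v_i]=f_i^{n_i}([t_i])$ need not lie in $I_i$, but it does lie in the orbital $(a_i,b_i)\subseteq(0,1)$, which is all you need for $v_i\in\mathcal B'$; and you should take $[t_i]$ small enough that $[t_i]$ and $f_i^{n_i}([t_i])$ are actually disjoint, not merely distinct, since you want $u_i$ and $v_i$ incomparable.

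The genuine gap is the final step. Applying powers of $f_i$ to the pair $\{[u_i],[v_i]\}$ keeps both intervals inside the orbital $(a_i,b_i)$ and, for large powers, accumulates them near $a_i$ or $b_i$—it does not move them toward an arbitrary target $[p_i]$ fixed without reference to $f_i$. With your suggested $p_i=0^i10$ the box $[p_i]$ may not even meet $(a_i,b_i)$; and even when $[p_i]\subseteq(a_i,b_i)$ there is no reason a single power $f_i^N$ lands \emph{both} $[u_i]$ and $[v_i]$ inside $[p_i]$. The clean repair is to reverse the order of choices. Fix a dyadic $x_i\in(a_i,b_i)$ and, processing $i=1,\dots,k$ in turn, pick two points $f_i^{m_i}(x_i)\neq f_i^{n_i}(x_i)$ from the infinite orbit $\{f_i^n(x_i):n\in\BZ\}$ avoiding all previously chosen points; this yields $2k$ distinct points in $(0,1)$. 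Now shrink each $[w_i]\ni x_i$ so that $f_i^{m_i}$ and $f_i^{n_i}$ are linear on $[w_i]$ and the $2k$ image intervals $[u_i]:=f_i^{m_i}([w_i])$, $[v_i]:=f_i^{n_i}([w_i])$ lie in pairwise disjoint neighbourhoods of those $2k$ points. Then the $u_i,v_i$ are pairwise incomparable words in $\mathcal B'$ with $u_i\sim_{\la f_i\ra}v_i$, as required.
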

	
\section{Proof of the main result}

To prove the main result we consider $3$  cases in the following $3$ propositions. 
	
\begin{proposition}\label{cd<0}
	Let $S$ be a finite subset of $F$ and let $g\in F$ be an element which does not fix any number in $(0,1)$. Assume that $\pi(g)$ is a common co-generator of the elements of $\pi(S)$ in $\Ztwo$.
 Then there exists an element $\sigma\in F$ such that $g^\sigma$ is a common co-generator of the elements of $S$ in $F$. 
\end{proposition}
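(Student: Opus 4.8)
The plan is to apply Theorem~\ref{gen} to the subgroup $H=\la f,g^\sigma : f\in S\ra$ for a suitably chosen $\sigma\in F$. Condition~(1) of Theorem~\ref{gen}, namely $H[F,F]=F$, is immediate: $\pi(g)=\pi(g^\sigma)$ is by hypothesis a common co-generator of the elements of $\pi(S)$, so $\pi(H)=\Ztwo$ regardless of the choice of $\sigma$. The entire work therefore goes into Condition~(2): we must choose $\sigma$ so that $[F,F]\leq\Cl(H)$. By Lemma~\ref{lem:all inner words identified}, it suffices to arrange that in the closure $\overline H=\Cl(H)$ all finite binary words in $\mathcal B'$ are $\sim_{\overline H}$-equivalent. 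Since $\overline H$ is closed and $g^\sigma\in\overline H$ has $(0,1)$ as an orbital (because $g$, hence $g^\sigma$, fixes no point of $(0,1)$), we are exactly in the situation of Lemma~\ref{lem:fun_dom}: if we can produce a single fundamental domain $[w)\subseteq(0,1)$ of $g^\sigma$ with $w\sim_{\overline H}w0\sim_{\overline H}w1$, then \emph{every} inner word is $\sim_{\overline H}$-equivalent to $w$, and we are done.

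So the heart of the argument is: choose $\sigma$ so that the orbital $(0,1)$ of $g^\sigma$ has a fundamental domain of the form $[w)$ for an inner word $w$, and so that $w\sim_{\overline H}w0$ and $w\sim_{\overline H}w1$. The second requirement should be met by exploiting the elements of $S$ via Lemma~\ref{lem:incomparable} (or Lemma~\ref{lem:obvious} when $S$ consists of non-trivial elements — note a trivial element of $S$ automatically has all of $\Ztwo$ in its image so can only occur in degenerate cases, but in any case can be ignored since it contributes nothing to generation and the hypothesis forces the nontrivial ones to carry the co-generator condition). Concretely: first pick $\sigma_0\in F$ so that $g^{\sigma_0}$ has a pair of branches $w\to w'$ with $[w']$ a proper dyadic subinterval of $[w)$ "pushed to the right" appropriately, making $[w)$ visibly a fundamental domain; the existence of such $\sigma_0$ follows from Lemma~\ref{lem:con} applied to the common push-up (say) orbital $(0,1)$ of $g$, after replacing $g$ by a conjugate if the orbital is push-down. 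Then, using that each non-trivial $f_i\in S$ has some orbital and hence (by Lemma~\ref{lem:incomparable}) pairs of branches $u_i\to v_i$ with incomparable inner words, further conjugate by an element supported off $[w)$ — i.e.\ the identity on $[w)$, which does not disturb the fundamental-domain structure just arranged — to move these pairs of branches so that, inside $\overline H$, one obtains $w0\sim_{\overline H}w\sim_{\overline H}w1$: the idea is that $\overline H$, being the piecewise-$H$ closure, can glue a branch identification supplied by (a conjugate of) some $f_i$ on $[w0]$ with the identity on $[w1]$ to deduce $w0\sim_{\overline H}w$, and symmetrically on $[w1]$.

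The main obstacle I expect is the bookkeeping that makes these two conjugations compatible: the conjugator $\sigma$ must simultaneously (a) carry a chosen orbital of each $f_i$ into position relative to $[w)$ so that the desired pairs of branches land where we need them, and (b) leave the fundamental-domain witness for $g$ intact. The "identity on a dyadic interval $J$ with $J\prec$ both orbitals" clause of Lemma~\ref{lem:con}, together with Remark~\ref{r:choice}, is precisely the tool for decoupling these: one reserves the region $[w)$ (or a sub-region of it) as the place where $g^\sigma$'s dynamics are controlled, and does all the branch-moving for the $f_i$ strictly to one side, so that the two constructions live on disjoint parts of $[0,1]$ and can be combined by Remark~\ref{r:choice}. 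Once $w\sim_{\overline H}w0\sim_{\overline H}w1$ is in hand and $[w)$ is a fundamental domain of $g^\sigma\in\overline H$, Lemma~\ref{lem:fun_dom} gives $v\sim_{\overline H}w$ for every $v\in\mathcal B'$ (every such $[v]\subseteq(0,1)$ is a closed subinterval of the orbital $(0,1)$), hence all inner words are equivalent, hence $[F,F]\leq\overline H=\Cl(H)$ by Lemma~\ref{lem:all inner words identified}, and Theorem~\ref{gen} yields $H=F$, i.e.\ $g^\sigma$ is a common co-generator of $S$.
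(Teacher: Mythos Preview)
Your overall architecture is right and matches the paper (Theorem~\ref{gen}, then Lemma~\ref{lem:all inner words identified} via Lemma~\ref{lem:fun_dom}, with $\sigma$ manufactured using Lemma~\ref{lem:con} and branch data from Lemma~\ref{lem:incomparable}). But there is a genuine gap in what you set out to prove, and a missing idea in how to close it.

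First, your group $H=\la f,g^\sigma: f\in S\ra$ is the wrong object. Saying that $g^\sigma$ is a \emph{common} co-generator of the elements of $S$ means $\la f_j,g^\sigma\ra=F$ for \emph{each} $j$ separately, not that $\la S\cup\{g^\sigma\}\ra=F$. Your argument, as written, only addresses the latter: you plan to feed in ``a branch identification supplied by \dots\ some $f_i$'' to get $w\sim w0\sim w1$, which would show $[F,F]\leq\Cl(\la S,g^\sigma\ra)$ but says nothing about $\Cl(\la f_j,g^\sigma\ra)$ for the other $j$'s. The statement requires one $\sigma$ that works for every $f_j$ individually.

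Second, the mechanism you sketch for obtaining $w\sim w0\sim w1$ is off: you cannot ``further conjugate \dots\ to move these pairs of branches'' of the $f_i$, because the $f_i$ are fixed and only $g$ is being conjugated. The paper's key idea is to encode all the $f_j$'s branch data into the shape of $g^\sigma$ simultaneously. Concretely: take a single tree $T$ whose branches include $u_j,v_j0,v_j1$ for \emph{every} $j$, and use Lemma~\ref{lem:con} to make $g^\sigma$ act as the shift $w_i\mapsto w_{i+1}$ on the inner branches $w_2,\dots,w_{n-1}$ of $T$. Then in each $H_j=\la f_j,g^\sigma\ra$ one has $w_2\sim\cdots\sim w_{n-1}$ from $g^\sigma$ alone; in particular $u_j\sim v_j0\sim v_j1$, and together with $u_j\sim v_j$ (from $f_j$) this gives $u_j\sim u_j0\sim u_j1$. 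Since each $[w_i)$ (in particular $[u_j)$) is a fundamental domain of $g^\sigma$ on its orbital $(0,1)$, Lemma~\ref{lem:fun_dom} finishes every $H_j$ at once. That uniform shift is the missing ingredient.
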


	\begin{proof}
		Since $g$ does not fix any number in $(0,1)$, the interval $(0,1)$ is an orbital of $g$. We can assume that $(0,1)$ is a push-up orbital of $g$, by replacing $g$ by its inverse if necessary (note that if for some $\sigma\in F$ the element $(g^{-1})^\sigma$ is a 
		  common co-generator of the elements of $S$ in $F$ then its inverse $g^\sigma$ is also a common co-generator of the elements of $S$ in $F$).

		Let us denote the elements of $S$ by $f_1,\dots,f_k$. 
		By Lemma \ref{lem:incomparable}, there exist incomparable finite binary words $u_1,v_1,\dots,u_k,v_k\in \mathcal B'$
	 such that $u_i\sim_{\la f_i\ra} v_i$ for each $i\in\{1,\dots,k\}$.
		
		Let $T$ be a finite binary tree which has the branches $u_i,v_i0,v_i1$ for all  $i\in\{1,\dots,k\}$ (note that such a tree exists).
		Let $w_1,\dots,w_n$ be the branches of $T$ and note that the intervals $[w_2],\dots,[w_{n-1}]$ are consecutive dyadic sub-intervals of $(0,1)$. Hence, by Lemma \ref{choice}, there exists a function $h\in F$ with the pairs of branches $w_i\to w_{i+1}$ for $i\in\{2,\dots,n-2\}$.
		
		Note that for every $x\in [w_2]\cup\dots\cup [w_{n-2}]=[.w_2,.w_{n-1}]$ 
		 we have $h(x)>x$. Hence, the interval $[.w_2,.w_{n-1}]$ is contained in some push-up orbital $(a,b)$ of $h$.
		Now, since  $(0,1)$ is a push-up orbital of $g$, by Lemma \ref{lem:con}, there exists an element $\sigma\in F$ such that $g^\sigma$ coincides with $h$ on the interval $[.w_2,.w_{n-1}]$. 
		Hence, $g^\sigma$
		 has the pairs of branches $w_i\to w_{i+1}$ for all $i\in\{2,\dots,n-2\}$. Note that since $(0,1)$ is an orbital of $g$, it is also an orbital of $g^\sigma$. 
		  Note also that for all $i\in\{2,\dots,n-1\}$ the interval $[w_i)\subseteq (0,1)$ is a fundamental domain of $g^\sigma$. Indeed, for $2\leq i \leq n-1$, we have  $g^{\sigma}(.w_i)=.w_{i+1}$. Hence,  $[w_i)=[.w_i,.w_{i+1})=[.w_i,g^\sigma(.w_i))$, so  $[w_i)$ is indeed a fundamental domain of $g^\sigma$. 
		  
		  We claim that $g^\sigma$ is a common co-generator of the elements of $S$ in $F$. 
		
		Let $j\in\{1,\dots,k\}$.	 We will prove that $\{f_j,g^\sigma\}$ is a generating set of $F$. Recall that $u_j,v_j0,v_j1$ are inner branches of $T$  and that $u_j\sim_{\la f_j\ra} v_j$. For simplicity, we will denote $f_j$ by $f$ and the binary words $u_j$ and $v_j$ by $u$ and by $v$, respectively, for the remainder of the proof. 
		Let $H$ be the subgroup of $F$ generated by $f$ and by $g^\sigma$. It suffices to prove that $H=F$.
		
		 First, note that by assumption, $\pi(g^\sigma)=\pi(g)$ is a co-generator of $\pi(f)$ in $\mathbb{Z}^2$. Hence, $\pi(H)=\mathbb{Z}^2$, so $H[F,F]=F$. Hence, by Theorem \ref{gen}, to prove that $H=F$ it suffices to prove that $\Cl(H)$ contains the derived subgroup of $F$. For that, we will make use of Lemma \ref{lem:all inner words identified}. 
		
		Let us denote the equivalence relation $\sim_{\Cl(H)}$ by $\sim$. Since $g^\sigma$ has the pairs of branches $w_i\to w_{i+1}$ for $i\in\{2,\dots,n-2\}$ , we have $w_2\sim w_3 \sim \dots\sim w_{n-1}$. 
		In particular, since $u,v0,v1$ are inner branches of $T$ (and as such belong to the set $\{w_2,\dots,w_{n-1}\}$), we have $u\sim v0\sim v1$. Since $u\sim v$ (indeed, we have $u\sim_{\la f\ra} v$ and $f\in H$), we get that $u\sim u0\sim u1$. Finally, recall that $(0,1)$ is an orbital of $g^\sigma$ and that each of the intervals $[w_i)$ for $i\in\{2,\dots,n-1\}$ is a fundamental domain of $g^\sigma$. In particular, since $u\in \{w_2,\dots,w_{n-1}\}$, the interval $[u)\subseteq (0,1)$ is a fundamental domain of $g^\sigma$. Since $u\sim u0\sim u1$, by Lemma \ref{lem:fun_dom}, every finite binary word $q$ such that $[q]$ is contained in $(0,1)$ is $\sim$-equivalent to $u$. In other words, all finite binary words in $\mathcal B'$ are $\sim$-equivalent. Hence, by Lemma \ref{lem:all inner words identified}, we have $[F,F]\leq \Cl(H)$, as necessary. 
\end{proof}

\begin{proposition}\label{cd neq 0}
	Let $S$ be a finite subset of $F$ and assume that there is a number in $(0,1)$ that is not fixed by any of the elements in $S$. Let $g\in F$ be such that $\pi(g)$ is a common co-generator of the elements of $S$ in $F$ and such that $g$ fixes some number in $(0,1)$. Assume in addition that $\pi(g)=(c,d)$ such that $cd\neq 0$.
	 Then there exists an element $\sigma\in F$ such that $g^\sigma$ is a common co-generator of the elements of $S$ in $F$. 
\end{proposition}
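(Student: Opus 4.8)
The plan is to mimic the strategy of Proposition \ref{cd<0}, but to exploit an orbital of $g$ rather than the orbital $(0,1)$, which no longer exists. Write $\pi(g)=(c,d)$ with $cd\neq 0$. Since $g$ fixes some point in $(0,1)$, the interval $(0,1)$ is not an orbital of $g$, but $g$ has finitely many orbitals. Because $c=\log_2(g'(0^+))\neq 0$, there is a leftmost orbital of $g$ of the form $(0,p)$ for some dyadic(!) $p\in(0,1)$; the endpoint $p$ is dyadic because it is a fixed point of $g$ other than $0$ (a breakpoint situation) — more precisely, the smallest fixed point $p>0$ of $g$ is dyadic since $g$ is linear near $0^+$ with slope $2^c\neq1$, so $p$ is a breakpoint of $g$, hence dyadic. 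Similarly, since $d\neq 0$, there is a rightmost orbital $(q,1)$ of $g$ with $q$ dyadic. Replacing $g$ by $g^{-1}$ if necessary (which does not affect the conclusion, exactly as in the previous proof), we may assume $(0,p)$ is a push-up orbital of $g$; if instead it is push-down we can still run the argument using the push-down case of Lemma \ref{lem:con}. Fix one such orbital $(0,p)$ and set $J'=[p,1]$ (or a dyadic interval $J$ with $J\prec(0,p)$ and $J\prec$ the chosen orbital of $h$ below), to be used so that $\sigma$ can be taken to fix $J$; this is what will let us control $g^\sigma$ outside the orbital.

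Write $S=\{f_1,\dots,f_k\}$. By Lemma \ref{lem:obvious} (using the hypothesis that some $\alpha\in(0,1)$ is fixed by no $f_i$), choose incomparable words $u_1,v_1,\dots,u_k,v_k,z\in\mathcal B'$ with $[u_i],[v_i]\prec[z]$ and $u_i\sim_{\la f_i\ra} v_i\sim_{\la f_i\ra} z$ for each $i$. Crucially, since all these intervals lie strictly to the left of $[z]$ and we have freedom in choosing $z$ long (hence $[z]$ small and far right), we may arrange that $[u_i],[v_i],[z]$ are all contained in a fixed dyadic interval $[r]\subseteq(0,1)$ which in turn is contained in the push-up orbital $(0,p)$ of $g$ — this is possible because $(0,p)$ has $0$ as an endpoint, so it contains dyadic intervals arbitrarily far to the left, and the conclusion of Lemma \ref{lem:obvious} is unaffected by replacing the ambient interval by any smaller one containing $\alpha$; alternatively re-run the proof of Lemma \ref{lem:obvious} inside $(0,p)$. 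Now build a finite binary tree $T$ having all of $u_i,v_i0,v_i1$ (for all $i$) among its branches and with all inner branches contained in $[r]$; let $w_1,\dots,w_n$ be its branches. By Lemma \ref{choice} pick $h\in F$ with pairs of branches $w_i\to w_{i+1}$ for $i\in\{2,\dots,n-2\}$ and with $h$ the identity outside $[.w_2,.w_{n-1}]\subseteq[r]\subseteq(0,p)$; then $[.w_2,.w_{n-1}]$ lies in a push-up orbital of $h$. By Lemma \ref{lem:con} applied to the push-up orbitals of $h$ and $g$ (with $J$ a dyadic interval $\prec$ both, e.g. a small interval just right of the relevant region, and also arranging $\sigma$ fixes a dyadic $J''\succ$ both so that $g^\sigma$ still fixes $p$ and has $\pi(g^\sigma)=\pi(g)$), we obtain $\sigma\in F$ with $g^\sigma$ coinciding with $h$ on $[.w_2,.w_{n-1}]$. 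Then $g^\sigma$ has the pairs of branches $w_i\to w_{i+1}$ for $i\in\{2,\dots,n-2\}$, the interval $[w_i)$ is a fundamental domain of $g^\sigma$ for $i\in\{2,\dots,n-1\}$, and these branches all lie inside the orbital $(0,p)$ of $g^\sigma=(g^\sigma)$, which is an orbital of $g^\sigma$ since $\sigma$ was chosen to fix $p$ and $0$.

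Fix $j$ and write $f=f_j$, $u=u_j$, $v=v_j$; let $H=\la f,g^\sigma\ra$. As before, $\pi(g^\sigma)=\pi(g)$ is a co-generator of $\pi(f)$, so $H[F,F]=F$, and by Theorem \ref{gen} it remains to show $[F,F]\leq\Cl(H)$. Writing $\sim$ for $\sim_{\Cl(H)}$: from the pairs of branches of $g^\sigma$ we get $w_2\sim\cdots\sim w_{n-1}$, so $u\sim v0\sim v1$; since $u\sim v$ (as $f\in H$ and $u\sim_{\la f\ra}v$) we get $u\sim u0\sim u1$. By Lemma \ref{lem:fun_dom}, since $[u)$ is a fundamental domain of $g^\sigma$ in its orbital $(0,p)$, every finite binary word $q$ with $[q]\subseteq(0,p)$ satisfies $q\sim u$. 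To extend this to all of $\mathcal B'$, I will use the companion orbital at the right end: the same construction, carried out symmetrically inside the orbital $(q_0,1)$ of $g$ coming from $d\neq0$, shows (or rather, we should set it up in a single pass) that $g^\sigma$ also has suitable pairs of branches $w'_i\to w'_{i+1}$ inside $(q_0,1)$ identifying a word $u'$ with its descendants; but more simply, once $u\sim u0\sim u1$ and $[u)\subseteq(0,p)$ is a fundamental domain, Lemma \ref{lem:fun_dom} already gives that every dyadic subinterval of $(0,p)$ is $\sim$-equivalent to $u$, and since $\pi(g^\sigma)=(c,d)$ with $cd\neq 0$, the element $g^\sigma$ actually moves points near $1$ as well, so $(0,p)$ cannot be its only orbital — choosing $\sigma$ (via the $J$-clause of Lemma \ref{lem:con}) to additionally install pairs of branches of $g^\sigma$ inside a rightmost orbital $(q_0,1)$, with a fundamental domain $[u'')$ there and $u''\sim u''0\sim u''1$, and noting $u\sim u''$ because some element of $\Cl(H)$ (indeed $\la g^\sigma\ra$ composed appropriately, or a piecewise-$H$ function) maps $[u]$ onto an interval inside $(q_0,1)$, we conclude by Lemma \ref{lem:fun_dom} that all of $(q_0,1)$ is also identified; finally any $[w']\subseteq(p,q_0)$ can be handled by a piecewise-$H$ map sending it into $(0,p)$. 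Hence all words in $\mathcal B'$ are $\sim$-equivalent, and Lemma \ref{lem:all inner words identified} gives $[F,F]\leq\Cl(H)$, so $H=F$.

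\textbf{Main obstacle.} The delicate point is the bookkeeping needed to make a \emph{single} $\sigma\in F$ simultaneously (a) conjugate a push-up orbital of $g$ so that $g^\sigma$ has the prescribed branches $w_i\to w_{i+1}$ near $0$, (b) preserve the endpoint $p$ (and, if needed, an analogous endpoint $q_0$ near $1$) so that these remain genuine orbitals of $g^\sigma$, (c) keep $\pi(g^\sigma)=\pi(g)$, and (d) also install branches inside a second orbital near $1$ so that the $\sim$-class of $u$ propagates across the whole of $(0,1)$, not merely across the single orbital $(0,p)$. The iterated use of the $J$-clause in Lemma \ref{lem:con} (identity on a dyadic interval disjoint from the orbital) is what makes this possible, but verifying that the various ``identity'' intervals can be chosen consistently, and that $\Cl(H)$ really contains a function carrying $[u]$ into the right-end orbital, is the part requiring care; everything else is a routine adaptation of Proposition \ref{cd<0}.
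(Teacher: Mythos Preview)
Your proposal has the right overall architecture (conjugate $g$ so that $g^\sigma$ has prescribed branches, then use Theorem \ref{gen} and Lemmas \ref{lem:fun_dom}, \ref{lem:all inner words identified}), but the heart of the argument is missing: you never give a valid mechanism linking the $\sim$-class inside the left orbital $(0,p)$ to the $\sim$-class inside the right orbital $(q_0,1)$, nor for the region in between. Your sentence ``$u\sim u''$ because some element of $\Cl(H)$ (indeed $\la g^\sigma\ra$ composed appropriately, or a piecewise-$H$ function) maps $[u]$ onto an interval inside $(q_0,1)$'' is exactly where the proof breaks. Powers of $g^\sigma$ preserve each orbital of $g^\sigma$, so they cannot carry $[u]$ from the left orbital to the right one; and invoking an unspecified ``piecewise-$H$ map'' is circular, since the whole point is to determine what lies in $\Cl(H)$. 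The same objection applies to your claim that any $[w']\subseteq(p,q_0)$ ``can be handled by a piecewise-$H$ map sending it into $(0,p)$''.

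The paper's proof resolves this with a specific bridge that you overlook even though you set it up: the extra word $w$ (your $z$) supplied by Lemma \ref{lem:obvious}, which satisfies $u_j\sim_{\la f_j\ra} w$ for \emph{every} $j$. The paper arranges, via two successive applications of Lemma \ref{lem:con} (producing $\sigma=\sigma_1\sigma_2$), that the conjugated left orbital $(0,\gamma_1)$ contains $[w_2]\cup\cdots\cup[w00]$, that the conjugated right orbital $(\gamma_2,1)$ contains $[w101]$ (with $[w101)$ a fundamental domain there), and that both endpoints $\gamma_1,\gamma_2$ lie in the interval $[.w01,.w1]$. Then the link is supplied by $f$, not by $g^\sigma$: from $u\sim w$ and $u\sim u0\sim u1$ one gets that every descendant of $w$ is $\sim u$; in particular $w101\sim u$, which seeds Lemma \ref{lem:fun_dom} in the right orbital. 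The middle region $[\gamma_1,\gamma_2]$ is handled because every sufficiently long $q\in\mathcal B'$ with $[q]$ meeting it has $w$ as a prefix. This use of $w$ as a bridge is the missing idea in your argument. (A minor separate error: orbital endpoints of elements of $F$ need not be dyadic; your claim that $p$ is a breakpoint, hence dyadic, is false, though this is not what makes the proof fail since Lemma \ref{lem:con} does not require dyadic orbital endpoints.)
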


\begin{proof}
	Since $\pi(g)=(c,d)$ and $c,d\neq 0$, the element $g$ does not fix a right neighborhood of $0$ and does not fix a left neighborhood of $1$. Since $g$ fixes some number in $(0,1)$, the element $g$ has at least two orbitals: $(0,\alpha)$ and $(\beta,1)$, for some $\alpha\leq \beta$. We can assume that $(0,\alpha)$ is a push-up orbital of $g$, by replacing $g$ by its inverse if necessary. We will also assume that $(\beta,1)$ is a push-down orbital of $g$, the proof in the other case being similar.

	Let us denote the elements of $S$ by $f_1,\dots,f_k$. By Lemma \ref{lem:obvious}, since there is a number in $(0,1)$ that is not fixed by any of the elements in $S$, 
	there exist incomparable finite binary words $u_1,v_1,\dots,u_k,v_k,w\in \mathcal B'$ such that
		$[u_1], [v_1],\cdots, [u_k],[v_k]\prec [w]$
	and such that for each $i\in\{1,\dots,k\}$ we have $u_i\sim_{\la f_i\ra} v_i\sim_{\la f_i\ra} w$. 
	
	Let $T$ be a finite binary tree which has the branch $w00$ 
	and the branches $u_i,v_i0,v_i1$ for all  $i\in\{1,\dots,k\}$ (note that such a tree exists).
	Let $w_1,\dots,w_n$ be the branches of $T$ and note that the intervals $[w_2],\dots,[w_{n-1}]$ are consecutive dyadic sub-intervals of $(0,1)$. 
	Let $m$ be such that $w_m\equiv w00$ (i.e., such that the $m^{th}$ branch of $T$ is $w00$). 
	By Lemma \ref{choice}, there exists a function $h_1\in F$ with the pairs of branches $w_i\to w_{i+1}$ for all $i\in\{2,\dots,m-1\}$. Let $I=[w_2]\cup\dots\cup [w_{m-1}]=[.w_2,.w]$ and note that  $h_1(I)=[w_3]\cup\dots\cup[w_m]=[.w_3,.w01]$. Since for every $x\in I$, we have $h_1(x)>x$, the interval $I$ 
	is contained in some push-up orbital $(a,b)$ of $h_1$.

	Since the orbital $(0,\alpha)$ of $g$ is also a push-up orbital, by Lemma \ref{lem:con}, there exists an element $\sigma_1\in F$ such that $g^{\sigma_1}$ coincides with $h_1$ on the interval $I$ and such that $I$ is contained in   $(\sigma_1(0),\sigma_1(\alpha))=(0,\sigma_1(\alpha))$. 
	 Note that since $(0,\alpha)$ is a push-up orbital of $g$, the interval $(0,\sigma_1(\alpha))$ is a push-up orbital of   $g^{\sigma_1}$. 
	 Since $I\subseteq(0,\sigma_1(\alpha))$, we also have $I\cup g^{\sigma_1}(I)=I\cup h_1(I)\subseteq (0,\sigma_1(\alpha))$.
	 
	 Since $(\beta,1)$ is a push-down orbital of $g$, the interval
	$(\sigma_1(\beta),1)$ is a push-down orbital of $g^{\sigma_1}$. Since $h_1(I)\subseteq (0,\sigma_1(\alpha))$, we  have $h_1(I)\prec (\sigma_1(\beta),1)$.
	 Let $h_2\in F$ be an element
	 which fixes the point $.w01$ and has the pair of branches  
	 $w101\to w100$ (such an element exists).
	 Note that
		 for every $x\in [w101]$ we have $h_2(x)<x$. Hence, the interval $[w101]$ is contained in some push-down orbital $(c,d)$ of $h_2$. Since $h_2$ fixes the point $.w01$, we have $c\geq .w01$. In particular, $h_1(I)\prec (c,d)$ (indeed, the right end-point of $h_1(I)$ is $.w01$).

	   Now, since $(c,d)$ and $(\sigma_1(\beta),1)$  are push-down orbitals of $h_2$ and of $g^{\sigma_1}$, respectively, and since $[w101]\subseteq (c,d)$ and $h_1(I)\prec (c,d),(\sigma_1(\beta),1)$, by Lemma \ref{lem:con}, there exists an element $\sigma_2\in F$ which fixes the interval $h_1(I)$ pointwise  such that  $[w101]\subseteq (\sigma_2(\sigma_1(\beta)),1)$ and such that $(g^{\sigma_1})^{\sigma_2}$ coincides with $h_2$ on the interval $[w101]$.
	 
	 Let $\sigma=\sigma_1\sigma_2$ and note that $g^\sigma$ coincides with $h_1$ on the interval $I$ (indeed, $g^{\sigma_1}$ coincides with $h_1$ on the interval $I$ and $\sigma_2$ is the identity on the interval  
	 $h_1(I)$). 
	 $g^\sigma$ also coincides with $h_2$ on the interval $[w101]$. 
	 	 Let $\gamma_1=\sigma(\alpha)$ and $\gamma_2=\sigma(\beta)$. Here is a summary of the relevant properties of $g^\sigma$. 
	 \begin{enumerate}
	 	\item[(1)]  $g^\sigma$ has a push-up orbital $(0,\gamma_1)$ and a push-down orbital $(\gamma_2,1)$.
	 	\item[(2)] The interval $I\cup h_1(I)=[w_2]\cup \dots\cup [w_m]=[.w_2,.w01]\subseteq (0,\gamma_1)$.
	 	\item[(3)] For every  $i\in\{2,\dots,m-1\}$, the function $g^\sigma$ has the pairs of branches $w_i\to w_{i+1}$ (since $g^\sigma$ coincides with $h_1$ on $I$).
	 	\item[(4)]  For each $i\in\{2,\dots,m\}$, the interval $[w_i)\subseteq (0,\gamma_1)$ is a fundamental domain of $g^\sigma$ (indeed, it follows from (3)).
	 	\item[(5)] The interval $g^\sigma$ has the pair of branches $w101\to w100$. 
		\item[(6)]  The interval $[w100]\cup [w101]\subseteq (\gamma_2,1)$. (Indeed, $[w101]$ is contained in the orbital $(\gamma_2,1)$. Hence, $g^\sigma([w101])=[w100]$ is also contained in that orbital.)
	 	\item[(7)]	
	 	 The interval $[w101)\subseteq (\gamma_2,1)$ is a fundamental domain of $g^\sigma$ (since $g^\sigma$ has the pair of branches $w101\to w100$, it maps the right endpoint of $[w101)$ to its left endpoint). 
	 \end{enumerate}
 
 	Note that from (1), (2) and (6) it follows that $.w01<\gamma_1<\gamma_2<.w1$.
	 Hence, there exist (possibly infinite) binary words $r_1,r_2$ such that $\gamma_1=.w01r_1$ and $\gamma_2=.w01r_2$. 
	   
	   We claim that $g^\sigma$ is a co-generator of the elements of $S$ in $F$. 
	 
	 Let $f_j\in F$. We will prove that $\{f_j,g^\sigma\}$ is a generating set of $F$. Recall that $u_j,v_j0,v_j1$ are inner branches of $T$ to the left of the branch $w00$ and that $u_j\sim_{\la f_j\ra} v_j\sim_{\la f_j\ra} w$. For simplicity, we will denote $f_j$ by $f$ and the binary words $u_j$ and $v_j$ by $u$ and by $v$, respectively, for the remainder of the proof. 
	 Let $H$ be the subgroup of $F$ generated by $f$ and by $g^\sigma$. It suffices to prove that $H=F$. First note that by assumption, $\pi(g^\sigma)=\pi(g)$ is a co-generator of $\pi(f)$ in $\mathbb{Z}^2$. Hence, $\pi(H)=\mathbb{Z}^2$, so $H[F,F]=F$. Hence, by Theorem \ref{gen}, to prove that $H=F$, it suffices to prove that $\Cl(H)$ contains the derived subgroup of $F$. For that, we will make use of Lemma \ref{lem:all inner words identified}. 
	 
	 Let us denote the equivalence relation $\sim_{\Cl(H)}$ by $\sim$.  Since $f\in H$ and $u\sim_{\la f\ra} v\sim_{\la f\ra} w$, we have $u\sim v\sim w$. Similarly, since $g^\sigma$ has the pairs of branches $w_i\to w_{i+1}$ for $i\in\{2,\dots,m-1\}$ , we have $w_2\sim w_3 \sim \dots\sim w_{m}$. 
	 In particular, since $u,v0,v1$ are inner branches of $T$ to the left of the branch $w00$ (and as such belong to the set $\{w_2,\dots,w_{m-1}\}$), we have $u\sim v0\sim v1$. Since  $u\sim v$, we get that $u\sim u0\sim u1$. Now, recall that $(0,\gamma_1)$ is an orbital of $g^\sigma$ and that for each $i\in\{2,\dots,m-1\}$ the interval $[w_i)\subseteq (0,\gamma_1)$  
	  is a fundamental domain of $g^\sigma$. In particular, the interval $[u)\subseteq (0,\gamma_1)$ is a fundamental domain of $g^\sigma$. Hence, by Lemma \ref{lem:fun_dom}, since $u\sim u0\sim u1$, every finite binary word $q$ such that $[q]$ is contained in $(0,\gamma_1)$ is $\sim$-equivalent to $u$. 
	  
	  Now, since $u\sim u0\sim u1$, every descendant of $u$ is $\sim$-equivalent to $u$. Since, $w\sim u$, every descendant of $w$ is  $\sim$-equivalent to $u$. In particular, $w101\sim w\sim u$. Recall that $[w101)\subseteq (\gamma_2,1)$ is a fundamental domain of $g^\sigma$. Since $w101$ is $\sim$-equivalent to each of its descendants, by Lemma \ref{lem:fun_dom}, for every finite binary word $q$ such that $[q]$ is contained in $(\gamma_2,1)$ we have $q\sim w101\sim u$.

	  Note that if $q\in\mathcal B'$ is a finite binary word such that $[q]$ is not contained in $(0,\gamma_1)$ and not contained in $(\gamma_2,1)$, then $[q]$ contains $\gamma_1$ or $\gamma_2$ or satisfies $[q]\subseteq (\gamma_1,\gamma_2)=(.w01r_1,.w01r_2)$. 
	  If $[q]\subseteq (\gamma_1,\gamma_2)$, then the word $w01$ is a prefix of $q$ and as such, $q\sim w\sim u$. Similarly, if the length of $q$ satisfies $|q|\geq |w|$ and $[q]$ contains $\gamma_1$ or $\gamma_2$, then the word $w$ is a prefix of $q$ and as such $q\sim u$. 
	  
	  The last $3$ paragraphs show that for any $q\in \mathcal B'$ such that $|q|\geq |w|$ we have $q\sim u$. It follows that for every $q\in \mathcal B'$ and every finite binary word $s$ such that $|s|=|w|$ we have $qs\sim u$. Hence, by Lemma \ref{lem:clear}, every finite binary word $q\in \mathcal B'$ satisfies $q\sim u$. 
 Therefore, by Lemma \ref{lem:all inner words identified}, we have $[F,F]\leq \Cl(H)$, as necessary. 
	 \end{proof}

\begin{proposition}\label{prop:cd=0}
	Let $(c,d)\in\{(\pm 1,0), (0,\pm 1)\}$ and let $g\in F$ be an element such that $\pi(g)=(c,d)$.  
	Let $S\subseteq F$ be a finite subset such that $(c,d)$ is a common co-generator  of the elements of $\pi(S)$ in $\Ztwo$.  Then there exists an element $\sigma\in F$ such that $g^\sigma$ is a common co-generator of the elements of $S$ in $F$. 
\end{proposition}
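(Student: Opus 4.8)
The plan is to treat the case $(c,d)=(0,1)$ (the other three cases $(0,-1)$, $(\pm1,0)$ being handled by symmetric arguments: reflecting the interval $[0,1]$ swaps the roles of $0^+$ and $1^-$, and replacing $g$ by $g^{-1}$ flips the sign of the slope data). So fix $g\in F$ with $\pi(g)=(0,1)$, and write $S=\{f_1,\dots,f_k\}$; since $(0,1)$ is a common co-generator of the $\pi(f_i)$ in $\Ztwo$, for each $i$ we have $\pi(f_i)=(a_i,b_i)$ with $|a_i\cdot 1-b_i\cdot 0|=|a_i|=1$, so $\pi(f_i)=(\pm1,b_i)$. In particular every $f_i$ has non-trivial slope at $0^+$, hence fails to fix a right neighborhood of $0$; thus there is a point $\alpha$ near $0$ not fixed by any $f_i$, and we may invoke Lemma \ref{lem:obvious} to obtain incomparable words $u_1,v_1,\dots,u_k,v_k,w\in\mathcal B'$ with $[u_1],[v_1],\dots,[u_k],[v_k]\prec[w]$ and $u_i\sim_{\la f_i\ra}v_i\sim_{\la f_i\ra}w$ for each $i$.

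Next I would analyze the orbital structure of $g$. Since $g'(0^+)=1$ but $g'(1^-)=2\neq1$, the element $g$ fixes a neighborhood of $0$ but not of $1$; hence $(0,1)$ is not an orbital of $g$, and $g$ has a rightmost orbital of the form $(\beta,1)$, which is a push-up orbital because $g'(1^-)>1$. (If instead one must handle $g'(1^-)=1/2$, i.e.\ $(c,d)=(0,-1)$, the rightmost orbital is push-down and one uses that variant of Lemma \ref{lem:con}.) The idea is now the same as in Propositions \ref{cd<0} and \ref{cd neq 0}: I will build a conjugate $g^\sigma$ whose orbital $(\gamma,1)$ (with $\gamma=\sigma(\beta)$) contains a controlled string of dyadic intervals, so that the induced equivalence $\sim_{\Cl(H)}$ (for $H=\la f_j,g^\sigma\ra$) collapses all of $\mathcal B'$. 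Concretely, choose a finite binary tree $T$ having all the branches $u_i,v_i0,v_i1$ (which all lie to the left of $[w]$) together with enough branches to the right of $[w]$, and label the branches $w_1,\dots,w_n$ left to right, with $w_m\equiv w$. Using Lemma \ref{choice} pick $h\in F$ with pairs of branches $w_i\to w_{i+1}$ for $i$ ranging over the indices in the block $\{m,\dots,n-2\}$, so $h$ pushes up on the interval $[.w,.w_{n-1}]\subseteq$ (a small right neighborhood of $.w$), hence that interval sits inside a push-up orbital of $h$; then apply Lemma \ref{lem:con} to the push-up orbitals $(\beta,1)$ of $g$ and this orbital of $h$ to get $\sigma\in F$ with $g^\sigma$ coinciding with $h$ on $[.w,.w_{n-1}]$, and with $[.w,.w_{n-1}]\subseteq(\gamma,1)$ where $\gamma=\sigma(\beta)$. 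Additionally arrange, using the "identity on $J$" clause of Lemma \ref{lem:con} with $J$ a dyadic interval to the left of both $(\beta,1)$ and the $h$-orbital, that $\sigma$ fixes $[u_j],[v_j]$ pointwise, so the relations $u_j\sim_{\la f_j\ra}v_j\sim_{\la f_j\ra}w$ are preserved — actually this is automatic since these relations refer to $f_j$, not to $\sigma$, so no care is needed there.

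Finally, for each fixed $j$, set $H=\la f_j,g^\sigma\ra$; write $f=f_j$, $u=u_j$, $v=v_j$. Since $\pi(g^\sigma)=\pi(g)=(0,1)$ is a co-generator of $\pi(f)$, we get $\pi(H)=\Ztwo$, so $H[F,F]=F$, and by Theorem \ref{gen} it remains to show $[F,F]\leq\Cl(H)$. Writing $\sim$ for $\sim_{\Cl(H)}$: from $g^\sigma$'s pairs of branches we get $w_m\sim w_{m+1}\sim\cdots\sim w_{n-1}$; in particular (choosing $T$ so that $w00$, $w01$ are among these) $w\sim w0\sim w1$, so every descendant of $w$ is $\sim$-equivalent to $w$. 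From $f\in H$ and $u\sim_{\la f\ra}v\sim_{\la f\ra}w$ we get $u\sim v\sim w$, and similarly $u0,u1,v0,v1$ being descendants of words $\sim$-equivalent to $w$ gives $u\sim u0\sim u1$. Now $(\gamma,1)$ is an orbital of $g^\sigma$ and $[w)\subseteq(\gamma,1)$ is a fundamental domain of it (as $g^\sigma(.w)$ is the next branch point to the right), so Lemma \ref{lem:fun_dom} applied with the word $w$ (note $w\sim w0\sim w1$) shows every $[q]\subseteq(\gamma,1)$ has $q\sim w$. Combined with the fact that every word of length $\geq|w|$ whose interval meets $\gamma$, or lies in $(.w,\gamma)$, has $w$ as a prefix and hence $\sim w$, we conclude (exactly as at the end of Proposition \ref{cd neq 0}, via Lemma \ref{lem:clear}) that every $q\in\mathcal B'$ satisfies $q\sim w$; so all words in $\mathcal B'$ are $\sim$-equivalent and Lemma \ref{lem:all inner words identified} gives $[F,F]\leq\Cl(H)$, hence $H=F$.

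I expect the main obstacle to be the same subtlety that forces the extra bookkeeping in Proposition \ref{cd neq 0}: unlike in Proposition \ref{cd<0}, the single available orbital $(\beta,1)$ of $g$ does \emph{not} reach down to $0$, so the words $u_j,v_j$ — which live near $0$ — are \emph{not} inside the fundamental-domain range of $g^\sigma$. The key trick that makes it work is the presence of the "common" word $w$ with $u_j\sim_{\la f_j\ra}v_j\sim_{\la f_j\ra}w$ and $[u_j],[v_j]\prec[w]$: one places $w$ (and its interval $[w]$) just to the left of the orbital $(\beta,1)$ of $g$, uses $g^\sigma$ to collapse all descendants of $w$ and all of $(\gamma,1)$ onto the class of $w$, and then transports this back to $u_j,v_j$ through the $\la f_j\ra$-relations. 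Verifying that $T$ can be chosen with $w$, $w00$, $w01$ all among the "pushed" inner branches $w_m,\dots,w_{n-1}$, and that $[.w,.w_{n-1}]$ genuinely lies in a single push-up orbital of $h$ (rather than straddling a fixed point of $h$), is the one place where I would be careful; it is routine once $T$ is set up so that $w$ is not its rightmost branch.
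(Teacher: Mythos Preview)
Your argument has a genuine gap in the final ``collapse'' step. In your setup (the case $(c,d)=(0,1)$), the single orbital you control is $(\gamma,1)$ with $\gamma=\sigma(\beta)<.w$, and Lemma \ref{lem:fun_dom} indeed gives $q\sim w$ for every $q$ with $[q]\subseteq(\gamma,1)$. But your claim that ``every word of length $\geq|w|$ whose interval meets $\gamma$, or lies in $(.w,\gamma)$, has $w$ as a prefix'' is simply false: since $\gamma<.w$, any such interval lies to the \emph{left} of $[w]$, and a generic word $q\in\mathcal B'$ with $[q]\subseteq(0,\gamma)$ (say $q=0^{|w|-1}1$) need not have $w$ as a prefix and is not one of the finitely many words $u_j,v_j$ you can reach via the $\la f_j\ra$-relations. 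The ``transport back to $u_j,v_j$'' trick you describe only identifies those specific words with $w$; it does nothing for the rest of $(0,\gamma)$. This is \emph{not} analogous to the end of Proposition \ref{cd neq 0}: there the two orbitals of $g$ together leave only a gap $[\gamma_1,\gamma_2]$ entirely inside $[w]$, so every long word whose interval is not in an orbital really does have $w$ as a prefix. Here the uncovered region $(0,\gamma)$ extends all the way down to $0$.

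The missing idea, which the paper's proof uses essentially, is that the hypothesis $|a_i|=1$ is stronger than just ``$f_i$ doesn't fix a neighborhood of $0$'': it says $f_i'(0^+)=2^{\pm1}$, so each $f_i$ (or its inverse) has a pair of branches $0^{n_i}\to 0^{n_i+1}$, giving $0^n\sim_{\la f_i\ra}0^{n+1}$ for all large $n$. This single extra relation is exactly what collapses the region near $0$: any word $q=0^\ell 1 r$ with $\ell$ large can be shifted via $0^{n+1}\sim 0^n$ down to $0^n1r$, whose interval lies safely inside the region already handled by the orbital. The paper's proof (written in the mirror case $(c,d)=(1,0)$) uses Lemma \ref{lem:incomparable} rather than Lemma \ref{lem:obvious}, places all the $u_i,v_i$ in the orbital side and reserves the branch $1^m$ on the other side, and then uses the relation $1^n\sim_{\la f\ra}1^{n+1}$ to finish. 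Your use of Lemma \ref{lem:obvious} and the word $w$ is not needed here and does not substitute for this step.
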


\begin{proof}
	We will prove the proposition in the case where $(c,d)=(1,0)$, the other cases being similar. 
	Let $f_1,\dots,f_k$ be the elements of $S$.
	Since $(1,0)$ is a common co-generator of the elements of $\pi(S)$ in $\Ztwo$, for each $i\in\{1,\dots,k\}$ the second component of $\pi(f_i)$ is $\pm 1$. Hence, for each $i\in \{1,\dots,k\}$ we have $f_i'(1^-)=2^{\pm 1}$. It follows that for each $i\in\{1,\dots,k\}$ there exists $n_i\in \mathbb{Z}$ such that $f_i$ or $f_i^{-1}$ has the pair of branches $1^{n_i}\to 1^{n_i+1}$. 
	 Let $n=\max\{n_i\mid i\in\{1,\dots,k\}\}$ and note that for each $i\in\{1,\dots,k\}$ we have $1^n\sim_{\la f_i\ra} 1^{n+1}$. Note also that since $\pi(g)=(1,0)$, we have $g'(0^+)=2$. Hence,  the element $g$ has a push-up orbital of the form $(0,\alpha)$ for some 
	 $\alpha\in (0,1)$.

	By Lemma \ref{lem:incomparable}, there exist incomparable finite binary words $u_1,v_1,\dots,u_k,v_k\in \mathcal B'$
	such that $u_i\sim_{\la f_i\ra} v_i$ for all $i\in\{1,\dots,k\}$. 
		Let $m\in \mathbb{N}$ be a positive integer such that $m\geq  n+1$ and such that $1^m$ is not a prefix of any of the words $u_1,v_1,\dots,u_k,v_k$ and note that $[u_1],[v_1],\dots,[u_k],[v_k]\prec [1^m]$. 
	
	Let $T$ be a finite binary tree which has the branch $1^m$ and the branches $u_i,v_i0,v_i1$ for all  $i\in\{1,\dots,k\}$. 
	Let $w_1,\dots,w_n$ be the branches of $T$ and note that the intervals $[w_2],\dots,[w_{n-1}]$ are consecutive dyadic sub-intervals of $(0,1)$. Hence, by Lemma \ref{choice} there exists a function $h\in F$ with the pairs of branches $w_i\to w_{i+1}$ for $i\in\{2,\dots,n-2\}$.
	
	Let $I=[w_2]\cup\dots\cup[w_{n-2}]$ and note that 
	for every $x\in I$ we have $h(x)>x$. Hence, the interval $I\cup h(I)=[w_2]\cup\dots\cup [w_{n-1}]=[.w_2,.w_{n}]=[.w_2,.1^m]$ is contained inside some push-up orbital $(a,b)$ of $h$.
	Recall that $(0,\alpha)$ is a  push-up orbital of the form $(0,\alpha)$ of $g$.  Hence, by Lemma \ref{lem:con}, there exists an element $\sigma\in F$ such that $g^\sigma$ coincides with $h$ on the interval $I$ and such that
	 $I\subseteq (0,\sigma(\alpha))$.		 
		 Since $(0,\sigma(\alpha))$ is a push-up orbital of $g^\sigma$ which contains $I$, it also contains $g^\sigma(I)=h(I)$. Hence, $\sigma(\alpha)>.1^m$ (since $.1^m$ is the right end-point of $h(I)$).
	 Note also that since for each $i\in\{2,\dots,n-2\}$, we have $g^\sigma([w_i])=[w_{i+1}]$, the interval $[w_i)$ is a fundamental domain of $g^\sigma$ for each $i\in\{2,\dots,n-1\}$.

	We claim that $g^\sigma$ is a co-generator of the elements of $S$ in $F$. 
	
	Let $f_j\in S$. We will prove that $\{f_j,g^\sigma\}$ is a generating set of $F$. Recall that $u_j,v_j0,v_j1$ are inner branches of $T$ and that $u_j\sim_{\la f_j\ra} v_j$. For simplicity, we will denote $f_j$ by $f$ and the binary words $u_j$ and $v_j$ by $u$ and by $v$, respectively, for the remainder of the proof. 
	Let $H$ be the subgroup of $F$ generated by $f$ and by $g^\sigma$. It suffices to prove that $H=F$. First note that by assumption, $\pi(g^\sigma)=(1,0)$ is a co-generator of $\pi(f)$ in $\mathbb{Z}^2$. Hence, $\pi(H)=\mathbb{Z}^2$, so $H[F,F]=F$. Hence, by Theorem \ref{gen}, it suffices to prove that $\Cl(H)$ contains the derived subgroup of $F$. For that, we will make use of Lemma \ref{lem:all inner words identified}.

	Let us denote the equivalence relation $\sim_{\Cl(H)}$ by $\sim$. Since $g^\sigma$ has the pairs of branches $w_i\to w_{i+1}$ for $i\in\{2,\dots,n-2\}$ , we have $w_2\sim w_3 \sim \dots\sim w_{n-1}$. 
	In particular, since $u,v0,v1$ are inner branches of $T$  (and as such belong to the set $\{w_2,\dots,w_{n-1}\}$), we have $u\sim v0\sim v1$. Since $u\sim v$, we get that $u\sim u0\sim u1$. Now, recall that $(0,\sigma(\alpha))$ is an orbital of $g^\sigma$ and that each of the intervals $[w_i)\subseteq (0,\sigma(\alpha))$ for $i\in\{2,\dots,n-1\}$ is a fundamental domain of $g^\sigma$. In particular, the interval $[u)\subseteq (0,\sigma(\alpha))$ is a fundamental domain of $g^\sigma$. Hence, by Lemma \ref{lem:con}, every finite binary word $q$ such that $[q]$ is contained in $(0,\sigma(\alpha))$ is $\sim$-equivalent to $u$.
	Since $\sigma(\alpha)>.1^m$,it follows that for every finite binary word $q$ such that $[q]\subseteq (0,.1^m]$ we have $q\sim u$. 
	
	Now, let $q\in  B'$ be such that $|q|\geq m$ and such that 
	 $[q]\not\subseteq (0,.1^m]$. 
	 Since $[q]\not\subseteq (0,.1^m]$ and $|q|\geq m$, 
	 the word $1^m$ must be a prefix of $q$. Hence, there exists some $\ell\geq m$ and a (possibly empty) finite binary word $r$ such that $q\equiv 1^\ell0r$. Note that $\ell\geq m\geq n+1$ and that $1^{n+1}\sim 1^n$. Using the fact that $1^{n+1}\sim 1^n$ repeatedly, we get that $1^\ell\sim 1^n$. Hence, $q\sim 1^n0r$.  Since $n<m$, the interval $[1^n0r]$ is contained in $(0,.1^m]$ and as such, $1^n0r\sim u$, so $q\sim u$. 
	 
	 Now, the last two paragraphs show that every word $q\in \mathcal B'$ such that $|q|\geq m$ satisfies $q\sim u$. As in the proof of Proposition \ref{cd neq 0}, that implies that all finite binary words in $\mathcal B'$ are $\sim $-equivalent to $u$. Hence, by Lemma \ref{lem:all inner words identified}, we have $[F,F]\leq \Cl(H)$, as necessary. 
\end{proof}

Now, we are ready to prove Theorem \ref{thm:most general}. Recall the theorem.  

\begin{theorem}
	Let $S$ be a finite subset of $F$ and let $g\in F$. Assume that $\pi(g)$ is a common co-generator of the elements of $\pi(S)$ in $\Ztwo$. Then there is an element $\sigma\in F$ such that $g^\sigma$ is a common co-generator of the elements of $S$ in $F$ if and only if at least one of the following conditions holds.
	\begin{enumerate}
		\item The element $g$ does not fix any number $\alpha\in (0,1)$. 
		\item There is a number $\beta\in (0,1)$ that is not fixed by any of the elements in $S$. 
	\end{enumerate}
\end{theorem}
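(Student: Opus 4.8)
The plan is to derive Theorem~\ref{thm:most general} by assembling Lemma~\ref{lem:only obstruction} with Propositions~\ref{cd<0}, \ref{cd neq 0} and \ref{prop:cd=0}, which between them settle every possibility for $\pi(g)$. The \emph{only if} direction is exactly Lemma~\ref{lem:only obstruction}: if neither Condition~(1) nor Condition~(2) holds, then $g$ fixes some $\alpha\in(0,1)$ while every $\beta\in(0,1)$ is fixed by some element of $S$, and the lemma forbids any conjugate of $g$ from being a common co-generator of the elements of $S$.

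For the \emph{if} direction I would write $\pi(g)=(c,d)$ and first note that, being a common co-generator of the elements of $\pi(S)$ in $\Ztwo$, the pair $(c,d)$ is part of a generating pair of $\Ztwo$, so $\gcd(c,d)=1$. Then I split into three cases. \textbf{Case 1:} Condition~(1) holds, i.e.\ $g$ fixes no point of $(0,1)$; then $(0,1)$ is an orbital of $g$ and Proposition~\ref{cd<0} supplies the required $\sigma$. \textbf{Case 2:} Condition~(1) fails (so $g$ fixes a point of $(0,1)$), Condition~(2) holds, and $cd\neq 0$; here Proposition~\ref{cd neq 0} applies directly. \textbf{Case 3:} Condition~(1) fails, Condition~(2) holds, and $cd=0$; coprimality then forces $(c,d)\in\{(\pm1,0),(0,\pm1)\}$ and Proposition~\ref{prop:cd=0} finishes the job (this case does not in fact use Condition~(2)). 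Since Condition~(1) or Condition~(2) holds by hypothesis, and Case~1 covers the first alternative while Cases~2--3 cover the second, this is exhaustive.

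At the level of this theorem there is essentially no obstacle left: the substance is all inside the three propositions, each of which conjugates $g$ so that it acquires a prescribed list of pairs of branches on and around a chosen orbital, then invokes the closure/equivalence-relation machinery (Lemmas~\ref{lem:fun_dom}, \ref{lem:all inner words identified} and Theorem~\ref{gen}) to show that the closure of the generated subgroup contains $[F,F]$. The one point I would flag while writing it up is the consistency between the two directions in the degenerate case $cd=0$: by Remark~\ref{rem:cd>0} such a $g$ always fixes a point of $(0,1)$, so Condition~(1) genuinely fails there; this does not conflict with Lemma~\ref{lem:only obstruction}, because when $(c,d)\in\{(\pm1,0),(0,\pm1)\}$ co-generates $\pi(S)$ every $f\in S$ has nontrivial slope at the endpoint where $g$ has slope $1$, hence $S$ cannot fix a whole one-sided neighborhood of that endpoint and Condition~(2) automatically holds. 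Including a remark to this effect makes the case analysis transparent, though logically Case~3 already disposes of it unconditionally.
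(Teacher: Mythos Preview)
Your proposal is correct and follows essentially the same approach as the paper: the \emph{only if} direction is Lemma~\ref{lem:only obstruction}, and the \emph{if} direction is obtained by splitting according to whether Condition~(1) holds (Proposition~\ref{cd<0}), or Condition~(1) fails and $cd\neq 0$ (Proposition~\ref{cd neq 0}), or $cd=0$ and hence $(c,d)\in\{(\pm1,0),(0,\pm1)\}$ (Proposition~\ref{prop:cd=0}). Your additional consistency remark in the $cd=0$ case is a correct and worthwhile sanity check not present in the paper, but it does not alter the argument.
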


\begin{proof}
	One direction follows from Lemma \ref{lem:only obstruction}. Let us  prove the other direction. Assume that  Condition (1) or Condition (2) holds.
	If Condition (1) holds, then by Proposition \ref{cd<0}, there exists $\sigma\in F$ such that $g^\sigma$ is a common co-generator of the elements of $S$ in $F$. Hence, we can assume that Condition (1) does not hold, i.e., that the element $g$ fixes some number in $(0,1)$. In that case, Condition (2) must hold. Let $c,d\in\mathbb{Z}$ be such that $\pi(g)=(c,d)$. If $cd\neq 0$ then all the assumptions of Proposition \ref{cd neq 0} hold. Hence, there exists an element $\sigma\in F$ such that $g^\sigma$ is a common co-generator of the elements of $S$ in $F$. If $cd=0$, then $(c,d)\in\{(0,\pm 1),(\pm 1,0)\}$ (since $(c,d)$ is part of a generating pair of $\Ztwo$). Then the result follows from Proposition \ref{prop:cd=0}.
\end{proof}

\begin{minipage}{3 in}
	Gili Golan\\
	Department of Mathematics,\\
	Ben Gurion University of the Negev,\\ 
	golangi@bgu.ac.il
\end{minipage}
\end{document}